\newcommand{\Tra}{\mathrm{Tra}}
\newcommand{\Alt}{\mathrm{Alt}}
\newcommand{\Sym}{\mathrm{Sym}}
\newcommand{\Aut}{\mathrm{Aut}}
\newcommand{\Cay}{\mathrm{Cay}}
\newcommand{\K}{\mathrm{K}}
\newcommand{\A}{\mathrm{A}}
\newcommand{\V}{\mathrm{V}}
\newcommand{\ZZ}{\mathbb{Z}}
\newcommand{\NN}{{\mathbb N}}
\renewcommand{\geq}{\geqslant}
\renewcommand{\leq}{\leqslant}
\renewcommand{\ge}{\geqslant}
\renewcommand{\le}{\leqslant}
\newtheorem{defn}{Definition}[section]
\newtheorem{theorem}[defn]{Theorem}
\newtheorem{remark}[defn]{Remark}
\newtheorem{lemma}[defn]{Lemma}
\newtheorem{cor}[defn]{Corollary}
\newtheorem{prop}[defn]{Proposition}
\title[Digraphs that are Cayley on two nonisomorphic groups]{Digraphs with small automorphism groups that are Cayley on two nonisomorphic groups}
\author{Luke Morgan}
\author{Joy Morris}
\author{Gabriel Verret}
\address{Luke Morgan\\
School of Mathematics and Statistics (M019)\\
University of Western Australia\\
Crawley, 6009\\
Australia} 
\email{luke.morgan@uwa.edu.au}
\address{Joy Morris\\
Department of Mathematics and Computer Science\\
University of Lethbridge\\
Lethbridge, AB T1K 3M4\\
Canada}
\email{joy.morris@uleth.ca}
\address{Gabriel Verret\\
Department of Mathematics, The University of Auckland\\
Private Bag 92019, Auckland 1142, New Zealand.}
\email{g.verret@auckland.ac.nz}	
\thanks{This research was supported by the Australian Research Council grant DE160100081 and by the Natural Science and Engineering Research Council of Canada. The first and last authors also thank the second author and the University of Lethbridge for hospitality.}
\subjclass[2010]{05C25, 20B25}
\keywords{Cayley digraphs, Cayley index}
\begin{document}
\begin{abstract}
Let $\Gamma=\Cay(G,S)$ be a Cayley digraph on a group $G$ and let $A=\Aut(\Gamma)$. The \emph{Cayley index} of $\Gamma$ is $|A:G|$. It has previously been shown that, if  $p$ is a prime, $G$ is a
 cyclic  $p$-group and $A$ contains a noncyclic regular subgroup, then the Cayley index of $\Gamma$ is superexponential in $p$. 

We present evidence suggesting that cyclic groups are exceptional in this respect. Specifically, we establish the contrasting result that, if $p$ is an odd prime and $G$ is abelian but not cyclic, and has order a power of $p$ at least $p^3$, then there is a Cayley digraph $\Gamma$ on $G$ whose Cayley index is just $p$, and whose automorphism group contains a nonabelian regular subgroup.  
\end{abstract}

\maketitle

\section{Introduction}
Every digraph and group in this paper is finite.  A \emph{digraph} $\Gamma$ consists of a set of \emph{vertices} $\V(\Gamma)$ and a set of \emph{arcs} $\A(\Gamma)$, each arc being an ordered pair of  distinct vertices.  (Our digraphs do not have loops.) We say that $\Gamma$ is a \emph{graph} if, for every arc $(u,v)$  of $\Gamma$, $(v,u)$ is also an arc. Otherwise, $\Gamma$ is a \emph{proper} digraph.  

The \emph{automorphisms} of $\Gamma$ are the permutations of $\V(\Gamma)$ that preserve $\A(\Gamma)$. They form a group under composition, denoted $\Aut(\Gamma)$.
 
Let $G$ be a group  and let $S$ be a subset of $G$ that does not contain the identity. The \emph{Cayley digraph} on $G$   with connection set $S$ is $\Gamma=\Cay(G,S)$, the digraph with  vertex-set  $G$ and where  $(u,v) \in \A(\Gamma)$ whenever $vu^{-1}\in S$.  The index of $G$ in $\Aut(\Gamma)$ is called the \emph{Cayley index} of $\Gamma$.

It is well-known that a digraph  is a Cayley digraph on $G$ if and only if  its automorphism group contains the right regular representation of $G$. A digraph may have more than one regular subgroup in its automorphism group and hence more than one representation as a Cayley digraph. This  is an interesting situation that has been studied in \cite{BamGiu,Dragan,Royle,Spiga,Boris}, for example.

Let $p$ be a prime. Joseph~\cite{Joseph} proved that if $\Gamma$ has order $p^2$ and $\Aut(\Gamma)$ has two regular subgroups, one of which is cyclic and the other not, then $\Gamma$ has Cayley index at least $p^{p-1}$. The second author generalised this in \cite{Joy-noniso}, showing that if $p\geq 3$, $\Gamma$ has order $p^n$ and $\Aut(\Gamma)$ has two regular subgroups, one of which is cyclic and the other not, then $\Gamma$ has Cayley index  at least $p^{p(n-1)-1}$. A simpler proof of this was later published in~\cite{Alspach-Du}. Kov\'{a}cs and Servatius~\cite{Kovacs-Servatius} proved the analogous result when $p=2$.

The theme of the results above is that if $\Aut(\Gamma)$ has two regular subgroups, one of which is cyclic and the other not, then $\Gamma$ must have ``large'' Cayley index. (In fact, close examination of the proofs of the results above reveals that $\Aut(\Gamma)$ having two distinct regular subgroups, one of which is cyclic,  might suffice. We will not dwell on this point.)

The goal of this paper is to show that cyclic $p$-groups are exceptional with respect to this property, at least among abelian $p$-groups. More precisely, we prove the following.

\begin{theorem}\label{main:theo}
Let $p$ be an odd prime and let $G$ be an abelian $p$-group. If $G$ has order at least $p^3$ and is not cyclic, then there exists a  proper Cayley digraph on $G$ with Cayley index $p$ and whose automorphism group contains a nonabelian regular subgroup.
\end{theorem}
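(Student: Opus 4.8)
The plan is to realise the desired digraph as a \emph{normal} Cayley digraph whose full automorphism group is an explicit group $A$ of order $p\,|G|$ carrying two regular subgroups. Concretely, I would first produce an automorphism $\alpha\in\Aut(G)$ of order $p$, set $A=G\rtimes\langle\alpha\rangle$ so that $|A:G|=p$, and exhibit a nonabelian regular subgroup $R\le A$; then I would construct a connection set $S$ with $\alpha(S)=S$, so that $A\le\Aut(\Cay(G,S))$, and argue that equality holds. Writing $A=G\rtimes\langle\alpha\rangle$ with $\alpha$ fixing the identity vertex, the point stabiliser is $\langle\alpha\rangle$, and a subgroup $R\le A$ of order $|G|$ is regular precisely when $R\cap\langle\alpha\rangle=1$. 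The task thus reduces to two essentially independent problems: a group-theoretic one (find $\alpha$ and $R$) and a combinatorial one (pin down $\Aut(\Cay(G,S))$).

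For the group theory I would split into two cases according to the exponent of $G$, decomposing $G=\langle x\rangle\times\langle y\rangle\times D$ with $\langle x\rangle$ of largest order $p^{e}$. If $e\ge 2$, define $\alpha$ to fix $x$ and $D$ pointwise and to send $y\mapsto y\,x^{p^{e-1}}$; this has order $p$ since $x^{p^{e-1}}$ has order $p$. Putting $t=x\alpha$ one computes $t^{p}=x^{p}$ and $[t,y]=x^{p^{e-1}}\in\langle x^{p}\rangle$, so $R=\langle t,y,D\rangle$ is nonabelian; as $R\cap G=\langle x^{p},y,D\rangle$ omits $x$, we get $\alpha=x^{-1}t\notin R$ and $R\cap\langle\alpha\rangle=1$, and since $|R|=|G|$ this $R$ is a nonabelian regular subgroup. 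If instead $G$ is elementary abelian (necessarily of rank at least $3$), write $G=\langle x\rangle\times\langle y\rangle\times\langle z\rangle\times D$ and define $\alpha$ to fix all chosen generators except $z\mapsto z\,y$; then $t=x\alpha$ has order $p$, $[t,z]=y$, and $R=\langle t,z,D\rangle$ is a Heisenberg-type nonabelian group with $R\cap G=\langle y,z,D\rangle\not\ni x$, hence again regular. In both cases the commutator is arranged to land away from $x$, which is exactly what prevents the relation $\alpha=x^{-1}t$ from forcing $\alpha\in R$.

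The combinatorial heart of the argument, and the step I expect to be the main obstacle, is to choose $S$ so that $\Gamma=\Cay(G,S)$ has $\Aut(\Gamma)$ no larger than $A$. I would take $S$ to be a union of $\langle\alpha\rangle$-orbits on $G\setminus\{1\}$, which guarantees $\alpha(S)=S$ and hence $A\le\Aut(\Gamma)$, and select these orbits with two goals in mind. First, they should be rigid under $\Aut(G)$, meaning $\Aut(G,S):=\{\sigma\in\Aut(G):\sigma(S)=S\}$ equals $\langle\alpha\rangle$; this is a finite checkable condition, securable by including enough orbits to pin down the chosen basis up to the action of $\alpha$. Second, and more seriously, I must rule out automorphisms of $\Gamma$ fixing the identity that are not induced by group automorphisms, i.e. show $\Aut(\Gamma)_1\le\Aut(G)$ (equivalently that the right regular representation of $G$ is normal in $\Aut(\Gamma)$). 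Granting this, $\Aut(\Gamma)=G\rtimes\Aut(G,S)=G\rtimes\langle\alpha\rangle=A$, so the Cayley index is $p$ and $A$ contains the nonabelian regular subgroup $R$ above.

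To establish this normality I would argue, using that $G$ is a $p$-group of odd order, that $\Aut(\Gamma)$ is itself a $p$-group with vertex stabiliser of order exactly $p$. The mechanism is local: an automorphism $\sigma$ fixing the vertex $1$ permutes the out-neighbourhood $S$ and the in-neighbourhood $S^{-1}$, and—because $S$ is a generating set whose $\langle\alpha\rangle$-orbit structure, together with the asymmetry from $S\neq S^{-1}$, encodes the coordinates of $G$—the restriction $\sigma|_S$ must coincide with some power of $\alpha$; replacing $\sigma$ by $\sigma\alpha^{-k}$, one checks by induction on distance from $1$ that an automorphism fixing $1$ and $S$ pointwise fixes every vertex, so $\Aut(\Gamma)_1=\langle\alpha\rangle$. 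Finally, to make $\Gamma$ a \emph{proper} digraph I would include the $\alpha$-fixed generator $x$ in $S$ while excluding $x^{-1}$ (legitimate since $x\neq x^{-1}$ as $p$ is odd and $x$ has order at least $p$), forcing $S\neq S^{-1}$. The delicate point, and where the genuine work lies, is making the rigidity condition $\Aut(G,S)=\langle\alpha\rangle$ hold simultaneously with the propagation argument and confirming that no automorphism of $\Gamma$ escapes $\Aut(G)$—a normal-Cayley-digraph analysis that is routine in spirit but technically demanding for a general abelian $p$-group.
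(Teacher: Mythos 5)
Your group-theoretic half is sound: setting $t=x\alpha$ and arranging the commutator to land in a complement of $\langle x\rangle$ is essentially the mechanism of the paper's \Cref{lemma:nonabelian} (there $H=\langle M,g\alpha\rangle$ with $M$ of index $p$ normalised but not centralised by the stabiliser), and your check that $R$ has order $|G|$ and meets $\langle\alpha\rangle$ trivially does give regularity --- though note you implicitly need $R$ normal in $A$ (automatic, since $R$ has index $p$ in the $p$-group $A$) to rule out intersection with the \emph{other} point-stabilisers, which are conjugates of $\langle\alpha\rangle$. The genuine gap is the combinatorial half: you never produce a connection set $S$. Everything that makes the theorem true is deferred to ``select these orbits'' and ``one checks by induction on distance from $1$.'' Proving $\Aut(\Gamma)_1=\langle\alpha\rangle$ is the heart of the result, and your plan to do it in one stroke for an arbitrary abelian $p$-group --- arbitrary rank, a single $S$ made of $\langle\alpha\rangle$-orbits, a rigidity claim $\Aut(G,S)=\langle\alpha\rangle$ plus a propagation argument --- is precisely the step you yourself concede is unproven. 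Your auxiliary claim that $\Aut(\Gamma)$ is a $p$-group is likewise not free; it depends entirely on the unspecified $S$. As written, the proposal proves the easy half and postulates the hard half.

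The paper sidesteps the arbitrary-rank difficulty with a structural reduction your plan is missing. It handles only three explicit base cases --- $\ZZ_p^3$, $\ZZ_{p^n}\times\ZZ_p$ with $n\ge 2$, and $\ZZ_{p^n}\times\ZZ_{p^m}$ with $n\ge m\ge 2$ --- where the induced digraph on $S$ can be computed exactly (e.g.\ exactly $2p$ arcs, $(x^{\alpha^i},x^{\alpha^{i+1}})$ and $(y^{\alpha^i},x^{\alpha^{i+1}})$), from which one gets $A_{1,s}=A_1^{+[1]}=A_1^{+[2]}$ and then $A_{1,s}=1$ by the local lemma (\Cref{lemma:fixNeighbourhood}): for a connected digraph with transitive group, $A_v^{+[1]}=A_v^{+[2]}$ forces $A_v^{+[1]}=1$. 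The general case then follows not by a new rigidity argument but by a product argument: since the base digraph $\Gamma_1$ is \emph{proper}, \Cref{cartesian} (the digraph version of Imrich's theorem) together with Walker's strict refinement theorem yields \Cref{thm:non-exploder}, so taking cartesian products with directed cycles (Cayley index $1$) on the remaining cyclic factors gives $\Aut(\Gamma_1\square\Gamma_2)=\Aut(\Gamma_1)\times G_2$ exactly, preserving the Cayley index $p$ and the nonabelian regular subgroup. If you want to complete your proposal, the realistic route is to adopt this reduction: carry out your stabiliser computation honestly for the three low-rank base cases, where an explicit $S$ makes the propagation argument a finite check, and transfer to all abelian $p$-groups via cartesian products, rather than attempting a uniform rigidity argument across all ranks simultaneously.
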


It would be interesting to generalise \Cref{main:theo} to nonabelian $p$-groups and to $2$-groups. More generally, we expect that ``most'' groups admit a Cayley digraph of ``small'' Cayley index such that the automorphism group of the digraph contains another (or even a nonisomorphic) regular subgroup. At the moment, we do not know how to approach this problem in general, or even what a sensible definition of ``small'' might be. (\Cref{lemma:propsubgroup} shows that the smallest  index of a  proper subgroup of  either  of the regular subgroups is a lower bound -- and hence that the Cayley index of $p$ in \Cref{main:theo} is best possible.) As an example, we prove the following.

\begin{prop}\label{prop:symn}
Let  $G$ be a group generated by an involution $x$ and an element $y$ of order $3$, and such that $ \ZZ_6 \ncong G \ncong \ZZ_3\wr \ZZ_2$. If $G$ has a subgroup $H$ of index $2$, then there is a Cayley digraph $\Gamma$  with Cayley index $2$ such that $\Aut(\Gamma)$ contains a regular subgroup  distinct from $G$ and isomorphic to $H\times\ZZ_2$.
\end{prop}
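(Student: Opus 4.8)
The plan is to realize the second regular subgroup inside the holomorph of $G$ using conjugation by the involution $x$, and then to reduce the whole proposition to the problem of producing a single Cayley digraph whose full automorphism group has order exactly $2|G|$. Write $G_R=\{R_g:g\in G\}$ for the right regular representation, $R_g\colon v\mapsto vg$, and let $L_g\colon v\mapsto gv$ denote left translation. Let $\alpha\in\Aut(G)$ be conjugation by $x$, that is $\alpha\colon g\mapsto xgx$; since $x^2=1$ this is an involution of $G$, and it is nontrivial precisely because $x$ is not central. (If $x$ were central, then $G=\langle x\rangle\times\langle y\rangle\cong\ZZ_6$, which is excluded.) As $\alpha$ fixes the identity and normalises $G_R$, with $\alpha R_g\alpha=R_{xgx}$, the group $A_0:=\langle G_R,\alpha\rangle=G_R\rtimes\langle\alpha\rangle$ has order $2|G|$ and point stabiliser $\langle\alpha\rangle$. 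A direct computation gives $R_x\circ\alpha=L_x$, so $L_x\in A_0$; moreover $L_x$ commutes with every element of $G_R$, satisfies $L_x^2=\mathrm{id}$, and lies outside $G_R$ (again because $x$ is not central).

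Next I build the regular subgroup. Since $H$ has index $2$ and $y$ has order $3$, the element $y$ lies in $H$ while $x\notin H$. Set $H_R=\{R_h:h\in H\}$ and $K:=\langle H_R,L_x\rangle$. Because $L_x$ centralises $H_R$, has order $2$, and is not in $H_R$, we obtain $K=H_R\times\langle L_x\rangle\cong H\times\ZZ_2$. The orbits of $H_R$ are the two left cosets $H$ and $xH$, and $L_x$ interchanges them, so $K$ is transitive; as $|K|=2|H|=|G|$, it is regular. Finally $L_x\in K\setminus G_R$, so $K\ne G_R$. Thus, as soon as $\alpha\in\Aut(\Gamma)$ we automatically have $L_x=R_x\circ\alpha\in\Aut(\Gamma)$ and hence $K\le\Aut(\Gamma)$ as a regular subgroup isomorphic to $H\times\ZZ_2$ and distinct from $G_R$; this is the easy, structural half of the argument.

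It therefore remains to construct a \emph{proper} Cayley digraph $\Gamma=\Cay(G,S)$ with $\alpha\in\Aut(\Gamma)$ and $\Aut(\Gamma)=A_0$, and this is the crux. I would take $S$ to be a union of $\langle\alpha\rangle$-orbits on $G\setminus\{1\}$, which guarantees $\alpha(S)=S$ and hence $\alpha\in\Aut(\Gamma)$, and I would choose $S$ not to be inverse-closed so that $\Gamma$ is a proper digraph. By \Cref{lemma:propsubgroup} the Cayley index is at least $2$ (the subgroup $H_R\times\{\mathrm{id}\}$ has index $2$ in $K$), so it suffices to prove the matching upper bound, namely that the only nonidentity automorphism of $\Gamma$ fixing the identity vertex is $\alpha$. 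The hard part is exactly this rigidity statement: $S$ must be chosen generically enough among $\alpha$-invariant connection sets that no unexpected symmetry fixes $1$. I expect to argue from the out-neighbourhood data that any such automorphism must permute the $\langle\alpha\rangle$-orbits compatibly with multiplication, and then use the generation of $G$ by the involution $x$ together with the order-$3$ element $y$ to force it into $\{\mathrm{id},\alpha\}$. The two excluded groups are precisely the obstructions to this step: $\ZZ_6$ already fails because there $x$ is central, so $\alpha=\mathrm{id}$ and $K=G_R$, whereas $\ZZ_3\wr\ZZ_2$ is small and symmetric enough that every $\alpha$-invariant connection set admits extra automorphisms, pushing the Cayley index above $2$.
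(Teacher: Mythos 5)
Your structural half is correct and is, in essence, the paper's \Cref{second-subgroup} specialised to $B=\langle x\rangle$: the observations that $L_x=R_x\circ\alpha$, that $L_x$ centralises $G_R$, and that $K=H_R\times\langle L_x\rangle$ is regular, isomorphic to $H\times\ZZ_2$ and distinct from $G_R$, are exactly how the paper produces the second regular subgroup (your remarks that $y\in H$ and $x\notin H$ also appear there). The genuine gap is the other half, which you yourself flag as ``the crux'' and then leave as a plan: you never exhibit a connection set $S$, and you never prove the rigidity statement $\Aut(\Gamma)=G_R\rtimes\langle\alpha\rangle$. There is no generic-choice principle to fall back on here: you need one $S$, uniformly described for every group generated by an involution and an element of order $3$, together with an argument pinning the stabiliser of the identity down to $\langle\alpha\rangle$; and any such argument must consume the hypotheses $G\ncong\ZZ_6$ and $G\ncong\ZZ_3\wr\ZZ_2$ at specific points, which your sketch does not locate. (DRR-type genericity results do not help: they produce digraphs with trivial stabiliser rather than ones admitting a prescribed nontrivial automorphism $\alpha$ with Cayley index exactly $2$, and they fail on small groups in any case.)

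The paper closes this gap in \Cref{Order2Order3} with the explicit choice $S=\{x,y,y^x\}$ --- which is indeed a union of $\langle\alpha\rangle$-orbits, as your plan demands, and generates $G$, so $\Gamma$ is connected. After disposing of $y^x=y^{-1}$ (then $G\cong\Sym(3)$, checked directly), it verifies that the seven words $1,yx,xy,y^2,y^xy,yy^x,(y^2)^x$ arising in $\Gamma^+(x)\cup\Gamma^+(y)\cup\Gamma^+(y^x)$ are pairwise distinct; this is precisely where $y^x\neq y$ (from $G\ncong\ZZ_6$) and $y^xy\neq yy^x$ (from $G\ncong\ZZ_3\wr\ZZ_2$) are used. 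From this local structure it deduces $A_1^{+[1]}=A_1^{+[2]}$, hence $A_1^{+[1]}=1$ by \Cref{lemma:fixNeighbourhood}, and then $|A_1|=2$ because $x$ is the unique out-neighbour of $1$ that is also an in-neighbour (so $A_1$ fixes $x$) while $c_x$ swaps $y$ and $y^x$. Until you carry out an argument of this kind, your proposal establishes only the conditional statement ``if some $\alpha$-invariant Cayley digraph on $G$ has Cayley index $2$, then the proposition holds.'' Two smaller points: the lower bound on the Cayley index needs no appeal to \Cref{lemma:propsubgroup} (the nontrivial $\alpha\in A_1$ already gives it, and the lemma concerns subgroups of $G$, not of $K$); and your parenthetical claim that on $\ZZ_3\wr\ZZ_2$ \emph{every} $\alpha$-invariant connection set admits extra automorphisms is asserted without proof --- the hypothesis only forces the failure of the particular construction there.
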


This paper is laid out as follows.  ~\Cref{cart-prod} includes structural results on cartesian products of digraphs  that will be required in the proofs of our main results, while in \Cref{prelim} we collect results about automorphism groups of digraphs. \Cref{sec:main} consists of the proof of~\Cref{main:theo}. Finally, in \Cref{sec:prop} we prove~\Cref{prop:symn} and consider the case of  symmetric groups.

\section{Cartesian products}\label{cart-prod}

The main result of this section is a version of  a result about cartesian products of graphs due  to Imrich~\cite[Theorem 1]{Imrich} that is adapted to the case of proper digraphs. Imrich's proof can be generalised directly to all digraphs, but his proof involves a detailed case-by-case analysis for small graphs,  which can be avoided by restricting attention to proper digraphs.

The \emph{complement} of a digraph $\Gamma$, denoted $\overline{\Gamma}$, is the digraph with vertex-set $\V(\Gamma)$, with  $(u,v) \in \A(\overline{\Gamma})$ if and only if $(u,v)\not\in \A(\Gamma)$, for every two  distinct  vertices $u$ and $v$ of $\Gamma$. It is easy to see that a digraph and its complement have the same automorphism group.

Given digraphs $\Gamma$ and $\Delta$, the \emph{cartesian product} $\Gamma \square \Delta$ is the digraph  with vertex-set $\V(\Gamma)\times \V(\Delta)$ and with $((u,v),(u',v'))$ being an arc if and only if either $u=u'$ and $(v,v')\in\A(\Delta)$, or $v=v'$ and $(u,u')\in\A(\Gamma)$. For each $u\in \V(\Gamma)$, we obtain a  \emph{copy}  $\Delta^u$ of $\Delta$ in $\Gamma \square \Delta$, the induced digraph on $\{(u,v) \mid v \in \V(\Delta)\}$. Similarly, for each $v\in \V(\Delta)$, we obtain a copy $\Gamma^v$ of $\Gamma$ in $\Gamma\square \Delta$ (defined analogously).

A digraph $\Gamma$ is \emph{prime} with respect to the cartesian product if the existence of an isomorphism from $\Gamma$ to $\Gamma_1 \square \Gamma_2$  implies that either $\Gamma_1$ or $\Gamma_2$ has order $1$, so that $\Gamma$ is isomorphic to either $\Gamma_1$ or $\Gamma_2$.

It is well known that, with respect to the cartesian product, graphs  can be factorised uniquely as a product of prime factors. Digraphs also have this property. In fact, the following stronger result holds.

\begin{theorem}[Walker, {\cite{Walker}}]\label{strict-factorisation}
Let $\Gamma_1,\ldots,\Gamma_k$, $\Gamma_1',\ldots,\Gamma_\ell'$ be prime digraphs. If $\alpha$ is an isomorphism   from $\Gamma_1 \square \cdots \square \Gamma_k$ to $\Gamma_1' \square\cdots\square\Gamma_k'$, then $k=\ell$ and there exist a permutation $\pi$ of $\{1, \ldots, k\}$ and isomorphisms $\alpha_i$ from $\Gamma_i$ to $\Gamma'_{\pi(i)}$ such that $\alpha$ is the product of the $\alpha_i$s ($1 \le i \le k$). 
\end{theorem}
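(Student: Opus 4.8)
\section{Proof proposal}

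The plan is to prove the result by recovering the Cartesian factorisation of a connected digraph from an intrinsic, isomorphism-invariant partition of its arc set, and then showing that any isomorphism between two prime factorisations must act coordinatewise. First I would restrict attention to connected digraphs, which is the substantive case: since the underlying undirected graph of a cartesian product is the cartesian product of the underlying graphs, the product $\Gamma_1 \square \cdots \square \Gamma_k$ is connected exactly when every factor is, and the disconnected situation (where naive unique factorisation is already delicate) is handled separately. Throughout, the object to track is, for each arc of $\Gamma=\Gamma_1\square\cdots\square\Gamma_k$, the ``direction'' recording which coordinate it changes; the whole proof rests on showing that the induced partition of $\A(\Gamma)$ into $k$ direction classes is determined by $\Gamma$ alone, independently of the chosen factorisation.

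The core step is to produce this intrinsic partition. For undirected graphs it is supplied by the Djokovi\'c--Winkler theory: the transitive closure $\Theta^{\ast}$ of the relation comparing the four pairwise distances between the endpoints of two edges has, as its classes, exactly the prime Cartesian directions. For digraphs I would run this argument on the underlying graph $\tilde\Gamma$ (using $\widetilde{\Gamma\square\Delta}=\tilde\Gamma\square\tilde\Delta$) to obtain the finest \emph{undirected} directions, and then coarsen the resulting partition by merging undirected directions whose orientations are coupled. That merging is genuinely necessary: a directed $4$-cycle is a prime digraph whose underlying graph $C_4\cong \K_2\square \K_2$ is not prime, so its two undirected directions must be fused into a single digraph direction. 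The upshot should be an intrinsic partition $\mathcal{P}(\Gamma)$ of $\A(\Gamma)$ whose classes are precisely the arc sets of the prime digraph factors.

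Granting this, the remainder is structural. Because $\mathcal{P}$ is defined purely from the adjacency and distance data of the digraph, any isomorphism $\alpha\colon \Gamma_1\square\cdots\square\Gamma_k \to \Gamma_1'\square\cdots\square\Gamma_\ell'$ carries $\mathcal{P}$ of the domain onto $\mathcal{P}$ of the codomain, hence induces a bijection between the two families of factor directions; this yields $k=\ell$ together with a permutation $\pi$ of $\{1,\dots,k\}$. It then remains to prove that $\alpha$ is ``separable'', that is, that the $\pi(i)$-coordinate of $\alpha(v)$ depends only on the $i$-coordinate of $v$. Fixing a base vertex, the $\Gamma_i$-layer through it is a maximal connected subdigraph whose arcs all lie in the $i$-th class of $\mathcal{P}$, and $\alpha$ maps it isomorphically onto a $\Gamma'_{\pi(i)}$-layer. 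A connectedness argument, moving one coordinate at a time and exploiting that arcs of distinct directions commute, forces the coordinatewise action; the restriction of $\alpha$ to the $i$-th layer then supplies the isomorphism $\alpha_i\colon \Gamma_i\to\Gamma'_{\pi(i)}$ with $\alpha=\prod_i\alpha_i$.

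The hard part is the second paragraph: producing an intrinsic, isomorphism-invariant partition for digraphs that matches the prime factorisation. The difficulty is that the asymmetry of arcs destroys the metric underpinning the undirected $\Theta$-relation, and, as the directed $4$-cycle shows, a prime digraph may have a non-prime underlying graph, so one cannot simply transport the undirected factorisation. The delicate work is in controlling exactly how undirected directions are allowed to merge once orientations are taken into account, and in verifying that the merged partition is still recoverable from $\Gamma$ intrinsically. This is precisely where restricting to proper digraphs removes the awkward degenerate cases, whereas Walker's theorem must accommodate all digraphs and therefore requires the full relational analysis.
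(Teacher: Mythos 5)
You should first be aware that the paper offers no proof of this statement: it is quoted from Walker, and the sentence following it notes that it is a corollary of Walker's Theorem~10, which is a general strict-refinement theorem for decompositions of relational structures, proved by a congruence-style argument rather than by distance relations. So your Djokovi\'c--Winkler route is necessarily different from the source, and the question is whether it works; as written, it does not. The crux of your plan --- an intrinsic, isomorphism-invariant partition of $\A(\Gamma)$ whose classes are exactly the prime directions --- is precisely the step you leave undone, and you say so yourself (``the hard part''), which makes the proposal an outline rather than a proof. Moreover, the one precise claim you do make about the undirected base case is false: the classes of $\Theta^*$ are not the prime cartesian directions; they yield the canonical isometric embedding of Graham--Winkler, which is in general strictly finer than the prime factorisation. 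Concretely, $C_6$ is prime with respect to the cartesian product yet has three $\Theta^*$-classes (the antipodal pairs of edges); and since $C_6$, viewed as a digraph, has every arc paired with its reverse, your orientation-coupling rule merges nothing, so your construction would ``factor'' the prime digraph $C_6$ into three copies of $\K_2$. Already for graphs one needs a coarser relation --- for instance the transitive closure of $\Theta\cup\tau$, where $\tau$ couples incident edges not lying on a common chordless square (Sabidussi, Vizing, Winkler, Feder) --- and for digraphs a further orientation analysis on top of that; none of this machinery is supplied in your sketch.

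A second genuine problem is your treatment of disconnectedness: the disconnected case cannot be ``handled separately,'' because unique prime factorisation actually fails there. Writing $x=\K_2$ and using $+$ for disjoint union, one has $(\K_1+x+x^2)\square(\K_1+x^3)\cong(\K_1+x)\square(\K_1+x^2+x^4)$ (both sides are the disjoint union of the hypercubes of dimensions $0$ through $5$), and a component-counting argument shows all four factors are prime, both as graphs and as digraphs; since the factors on the two sides have orders $\{7,9\}$ and $\{3,21\}$, no permutation $\pi$ with factorwise isomorphisms can exist. Thus the statement as printed in the paper is only correct under a (weak) connectedness hypothesis, which is what Walker assumes and which is preserved and reflected by $\square$. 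Your proposal should therefore take connectedness as a hypothesis and flag the paper's omission, rather than promise a separate treatment of a case in which the conclusion is false. The final paragraph of your plan (recovering layers and deducing that $\alpha$ acts coordinatewise) is standard once an isomorphism-invariant product partition is in hand, so the entire burden of the proof rests on the step you have not carried out.
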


\Cref{strict-factorisation} is a corollary of \cite[Theorem 10]{Walker}, as noted in the ``Applications" section of \cite{Walker}. We now present the version of Imrich's result that applies to proper digraphs.

\begin{theorem}\label{cartesian}
If $\Gamma$ is a proper digraph, then at least one of $\Gamma$ or $\overline{\Gamma}$ is prime with respect to cartesian product.
\end{theorem}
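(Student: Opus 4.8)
The plan is to argue by contradiction: suppose that neither $\Gamma$ nor $\overline{\Gamma}$ is prime, so that $\Gamma\cong\Gamma_1\square\Gamma_2$ and $\overline{\Gamma}\cong\Delta_1\square\Delta_2$ with all four factors of order at least $2$. The bookkeeping device I would use is to classify each unordered pair $\{x,y\}$ of vertices by how many of the two possible arcs between $x$ and $y$ are present: call $\{x,y\}$ \emph{full} if both arcs are present, \emph{empty} if neither is, and \emph{mixed} if exactly one is. Two facts drive the argument. First, passing to the complement interchanges full and empty pairs and fixes the mixed pairs; in particular $\Gamma$ is proper precisely when it has at least one mixed pair, and the set of mixed pairs is the same for $\Gamma$ and for $\overline{\Gamma}$. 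Second, in any nontrivial cartesian product the pairs of vertices that differ in both coordinates are automatically empty (no coordinate agrees, so no arc of the product joins them), while pairs inside a single layer inherit their type from the relevant factor.

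Next I would pass to the underlying undirected graph $U(D)$ of a digraph $D$, in which $x$ and $y$ are joined exactly when $\{x,y\}$ is full or mixed. A direct check gives the product identity $U(\Delta\square\Delta')=U(\Delta)\square U(\Delta')$, so the two factorizations yield two nontrivial factorizations $U(\Gamma)=U(\Gamma_1)\square U(\Gamma_2)$ and $U(\overline{\Gamma})=U(\Delta_1)\square U(\Delta_2)$ of graphs. Writing $N=|\V(\Gamma)|$, $m=|\Gamma_1|$, $n=|\Gamma_2|$, $a=|\Delta_1|$, $b=|\Delta_2|$, the edges of $U(\Gamma)$ are exactly the non-empty pairs of $\Gamma$. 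From the $\Gamma$-factorization there are at most $\tfrac{N}{2}(m+n-2)$ of them, since all $\tfrac{N}{2}(m-1)(n-1)$ diagonal pairs are empty; from the $\Delta$-factorization at least $\tfrac{N}{2}(a-1)(b-1)$ of them are forced, since the $\Delta$-diagonal pairs are empty in $\overline{\Gamma}$, hence full, hence non-empty, in $\Gamma$. This gives $(a-1)(b-1)\le m+n-2$ and, symmetrically, $(m-1)(n-1)\le a+b-2$. Together with $mn=ab=N$ these inequalities force the orders of the factors into a short explicit list of extremal shapes -- essentially a factor of order $2$ against the rest, or the balanced cases with $N\le 9$ -- and, at equality, force the underlying graphs of all four factors to be complete, i.e.\ every pair inside a layer is full or mixed.

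The endgame, and the step I expect to be the main obstacle, is to eliminate these finitely many extremal configurations, and this is exactly where properness enters. By the previous paragraph a mixed pair exists (as $\Gamma$ is proper), and it must lie simultaneously inside a layer of the $\Gamma$-factorization and inside a layer of the $\Delta$-factorization, since diagonal pairs are never mixed. I would then compare the graph of full pairs of $\Gamma$ as read off from each product structure: from the $\Gamma$-side it is supported on the $\Gamma$-layers, while from the $\Delta$-side (via the complement) it is exactly the set of $\Delta$-diagonal pairs, which across layers of order $2$ forms complete-bipartite-minus-matching pieces. For the shapes with a factor of order $2$ these two descriptions are non-isomorphic graphs -- one contains cliques and hence triangles, the other is bipartite and triangle-free -- which is the desired contradiction. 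For the few remaining small balanced shapes (notably the self-complementary rook graph on $9$ vertices) I would instead invoke the strict unique factorisation of \Cref{strict-factorisation} to pin down the $\Gamma$- and $\Delta$-layer systems and show they are transversal, so that no pair can lie in a common $\Gamma$-layer and a common $\Delta$-layer, contradicting the existence of a mixed pair. This last analysis replaces Imrich's case-by-case treatment of small graphs, and the point of restricting to proper digraphs is precisely that the nonempty set of mixed pairs supplies the leverage that makes it short.
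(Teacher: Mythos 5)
Your route is genuinely different from the paper's: you are reconstructing Imrich's original counting strategy, whereas the paper replaces it with a short local argument precisely in order to avoid the extremal case analysis that your plan runs into. Much of your set-up is sound: the full/empty/mixed trichotomy, the facts that complementation exchanges full and empty pairs while fixing mixed ones and that pairs differing in both coordinates of a cartesian product are empty, the two inequalities $(a-1)(b-1)\le m+n-2$ and $(m-1)(n-1)\le a+b-2$, and the observation that a mixed pair must lie inside a common layer of \emph{both} factorizations are all correct, and the inequalities do reduce matters to a factor of order $2$ on one side plus a few balanced shapes.

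The genuine gap is in the endgame. Your identification of the full-pair graph of $\Gamma$ with \emph{exactly} the set of $\Delta$-diagonal pairs requires equality in both counting bounds, and in the one infinite family that survives, $m=a=2$ and $n=b=N/2$, the bounds have slack: $(a-1)(b-1)=N/2-1<N/2=m+n-2$. Consequently the full-pair graph may contain further edges inside $\Delta$-layers (pairs that are empty in $\Delta_2$) and then need not be bipartite; symmetrically, ``supported on the $\Gamma$-layers'' yields no triangles, since nothing forces any within-layer pair to be full. The bipartite-versus-triangles contradiction therefore evaporates exactly where it is needed, and since this family grows with $N$, ``finitely many extremal configurations'' is also inaccurate: the elimination must be uniform in $N$, which your sketch does not supply. (Conversely, whenever $(a-1)(b-1)=m+n-2$ holds with equality --- e.g.\ the $3\times 3$ versus $3\times 3$ and $2\times 6$ versus $3\times 4$ shapes --- both bounds are tight, every pair is forced to be full or empty, so $\Gamma$ is a graph, contradicting properness outright; your appeal to \Cref{strict-factorisation} there is unnecessary.) For comparison, the paper's proof needs none of this: writing $\Gamma=\overline{A\square B}$ with $A$ proper and $\varphi\colon\Gamma\to C\square D$, it picks an arc $(a,a')$ of $A$ whose reverse is absent and uses the observation that two vertices in a common layer of a cartesian product trap every common in-/out-neighbour in that layer, propagating this to push all of $\varphi(\V(\Gamma))$ into a single copy of $C$ and forcing $D$ to be trivial.
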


\begin{proof} 
Towards a contradiction, assume that $\Gamma=\overline{A\square B}$ and that $\varphi$ is an isomorphism from $\Gamma$ to $C\square D$, where $A$, $B$, $C$, and $D$ all have at least 2 vertices.  A key observation is the fact that if  $x$ and $y$ are distinct vertices in the same copy of $X$ in a cartesian product $X \square Y$, then every vertex contained in at least one arc with each of $x$ and $y$ must also lie in that copy of $X$.

Since $\Gamma$ is a proper digraph, without loss of generality so is $A$, and $A$ has an arc $(a,a')$ such that $(a',a)$ is not an arc of $A$. Let $b$ be a vertex of $B$. 

Pick $b'$ to be a  vertex of $B$ distinct from $b$. We claim that $\varphi( (a,b))$, $\varphi((a',b))$, $\varphi((a,b'))$, $\varphi((a',b'))$ all lie in some copy of either $C$ or $D$.
The digraph below is the subdigraph of $\Gamma$ under consideration.

\begin{center}\includegraphics[scale=0.8]{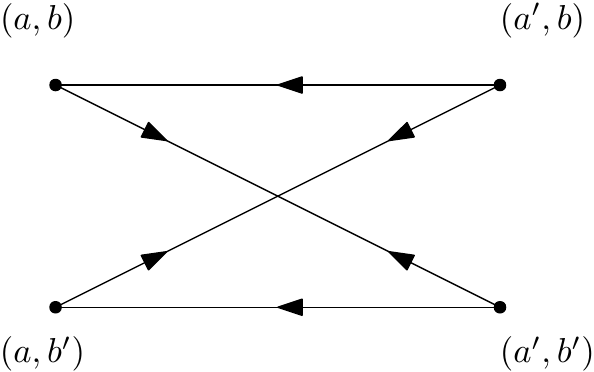}\end{center}

Since every arc in $C \square D$ lies in either a copy of $C$ or $D$, we may assume that the arc from $\varphi((a,b))$ to $\varphi((a',b'))$ lies in some copy $C^d$ of $C$, say $\varphi((a,b))=(c,d)$ and $\varphi((a',b'))=(c',d)$, with $c,c'\in\V(C)$ and $d\in\V(D)$. Towards a contradiction, suppose that  $\varphi((a',b)) \notin C^d$. Then the arc from $\varphi((a',b))$ to $(c,d)$ must lie in $D^c$, so $\varphi((a',b))=(c,d')$ for some vertex $d'$ of $D$. Since there is a path of length $2$ via $\varphi((a,b'))$ from  $(c',d)$  to $(c,d')$ and since $\varphi((a,b')) \neq (c,d)=\varphi((a,b))$, we must have $\varphi((a,b'))=(c',d')$. But now we have an arc from $(c,d')$ to $(c,d)$  and an arc from $(c',d)$ to $(c',d')$, so arcs in both directions between $d$ and $d'$ in $D$. This implies that there are arcs in both directions between $(c,d)=\varphi((a,b))$ and  $(c,d')=\varphi((a',b))$, a contradiction. Hence $\varphi((a',b)) \in C^d$, and by the observation in the first paragraph, we have $\varphi((a,b')) \in C^d$ also. This proves the claim. 

By repeatedly applying the claim, all elements of   $\varphi(\{a,a'\}\times \V(B))$ lie in some copy of $C$ or $D$, say, $C^d$. Let $a''\in\V(A)-\{a,a'\}$ and let $b$ and $b'$ be distinct vertices of $B$. By the definitions of cartesian product and complement, there are arcs in both directions between $(a'',b)$ and $(a,b')$ and between $(a'',b)$ and $(a',b')$. Thus, by the observation in the first paragraph, $\varphi((a'',b))$ also lies in $C^d$. This shows that every vertex of $\Gamma$ lies in $C^d$, so $D$ is trivial. This is the desired contradiction.
\end{proof}

\begin{remark}
Imrich's Theorem~\cite[Theorem 1]{Imrich} states that, for every graph $\Gamma$, either $\Gamma$ or $\overline{\Gamma}$ is prime with respect to the cartesian product, with the following exceptions :  $\K_2 \square \K_2$, $\K_2 \square \overline{\K_2}$,  $\K_2 \square \K_2 \square \K_2$, $\K_4 \square \K_2$, $\K_2 \square \K_4^-$, and $\K_3 \square \K_3$, where $\K_n$ denotes the complete graph on $n$ vertices and $\K_4^-$ denotes $\K_4$ with  an edge deleted. These would therefore be the complete list of  exceptions to \Cref{cartesian} if we removed the word `proper' from  the hypothesis.
\end{remark}

\begin{remark}
While most of our results apply only to finite digraphs,~\Cref{cartesian} also applies to infinite ones (as does Imrich's Theorem). The proof is the same.
\end{remark}

\begin{cor}\label{thm:non-exploder}
Let $\Gamma_1$ be a proper Cayley digraph on $G$ with Cayley index $i_1$ and let $\Gamma_2$ be a Cayley graph on $H$ with Cayley index $i_2$. If $i_1>i_2$, then at least one of $\Gamma_1\square \Gamma_2$ or $\overline{\Gamma_1} \square\Gamma_2$ has automorphism group equal to $\Aut(\Gamma_1)\times \Aut(\Gamma_2)$ and, in particular, is a proper Cayley digraph on $G\times H$ with Cayley index $i_1i_2$.
\end{cor}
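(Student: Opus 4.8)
The plan is to prove that the automorphism group of the cartesian product factors as a direct product, by combining the unique prime factorisation (Walker's \Cref{strict-factorisation}) with the primality result (\Cref{cartesian}).

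First I would factor each side into primes. Write $\Gamma_1 = P_1 \square \cdots \square P_k$ and $\Gamma_2 = Q_1 \square \cdots \square Q_m$ as products of prime digraphs. The key structural input is \Cref{cartesian}: since $\Gamma_1$ is a proper digraph, at least one of $\Gamma_1$ or $\overline{\Gamma_1}$ is prime. Replacing $\Gamma_1$ by $\overline{\Gamma_1}$ if necessary (which is harmless since a digraph and its complement have the same automorphism group, and $\overline{\Gamma_1}$ is still Cayley on $G$ with the same index), I may assume $\Gamma_1$ is itself prime, so $k=1$. Thus I would like to apply \Cref{strict-factorisation} to the factorisation of $\Gamma_1 \square \Gamma_2 = \Gamma_1 \square Q_1 \square \cdots \square Q_m$ and deduce that any automorphism permutes the prime factors.

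The heart of the argument is to show that \emph{no} automorphism can move the $\Gamma_1$ factor to any $Q_j$ factor, so that $\Aut(\Gamma_1 \square \Gamma_2)$ preserves the splitting into $\Gamma_1$ and $\Gamma_2 = Q_1\square\cdots\square Q_m$, which forces $\Aut(\Gamma_1\square\Gamma_2) = \Aut(\Gamma_1)\times\Aut(\Gamma_2)$. Here I would use the hypotheses crucially. By \Cref{strict-factorisation}, every automorphism $\alpha$ of $\Gamma_1 \square \Gamma_2$ induces a permutation $\pi$ of the prime factors with each factor isomorphic to its image. Since $\Gamma_1$ is a proper digraph, it is not isomorphic to any graph, so $\pi$ cannot send the $\Gamma_1$ factor to any of the $Q_j$, which are all factors of the graph $\Gamma_2$ (a cartesian product of graphs is a graph, hence each $Q_j$ is a graph). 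Therefore $\pi$ fixes the $\Gamma_1$ factor. With the $\Gamma_1$ factor fixed setwise, $\pi$ restricts to a permutation of the $Q_j$, and collecting the corresponding $\alpha_i$ shows $\alpha = \alpha_0 \times \beta$ with $\alpha_0 \in \Aut(\Gamma_1)$ and $\beta \in \Aut(\Gamma_2)$; conversely every such product is an automorphism. This gives $\Aut(\Gamma_1\square\Gamma_2) = \Aut(\Gamma_1)\times\Aut(\Gamma_2)$.

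Finally I would extract the numerical conclusion. The right regular representations of $G$ and $H$ give a regular subgroup $G\times H$ of $\Aut(\Gamma_1)\times\Aut(\Gamma_2)$ acting on $\V(\Gamma_1)\times\V(\Gamma_2)$, so $\Gamma_1\square\Gamma_2$ (or its complement-twisted version) is a Cayley digraph on $G\times H$; since $\Gamma_1$ is proper so is the product. Its Cayley index is $|\Aut(\Gamma_1)\times\Aut(\Gamma_2) : G\times H| = |\Aut(\Gamma_1):G|\cdot|\Aut(\Gamma_2):H| = i_1 i_2$. The one place where I expect to need the hypothesis $i_1 > i_2$, rather than just $\Gamma_1$ being proper, is in ruling out coincidental isomorphisms among the prime factors that could let $\pi$ mix $\Gamma_1$ with the $Q_j$: a priori $\Gamma_1$ being proper already prevents it from matching a graph factor, but the strict inequality on Cayley indices is what the authors presumably use to guarantee the relevant factors are genuinely distinct and that the argument does not degenerate. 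Pinning down exactly how $i_1 > i_2$ enters — as opposed to merely $\Gamma_1$ proper — is the step I would treat most carefully.
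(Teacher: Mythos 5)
Your proof is correct, and its skeleton matches the paper's: pass to whichever of $\Gamma_1$, $\overline{\Gamma_1}$ is prime using \Cref{cartesian} (harmless, since complementation preserves the automorphism group, the Cayley property and properness), note the containment $\Aut(\Gamma_1)\times \Aut(\Gamma_2)\leq \Aut(\Gamma_1\square\Gamma_2)$, and use \Cref{strict-factorisation} to show that no automorphism of the product can move the $\Gamma_1$ factor onto a prime factor of $\Gamma_2$. The one step you justify differently --- precisely the step you flagged as uncertain --- is where the paper invokes $i_1>i_2$: the authors argue that $i_1>i_2$ prevents $\Gamma_1$ from being a cartesian factor of $\Gamma_2$, whereas you argue that the prime factor coming from $\Gamma_1$ is a proper digraph while every cartesian factor of the graph $\Gamma_2$ is itself a graph, so \Cref{strict-factorisation} cannot match them. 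Your justification is valid for the statement as written, and it in fact shows that when $\Gamma_2$ is literally a graph the hypothesis $i_1>i_2$ is superfluous --- so your suspicion that the inequality must enter somewhere is, for this formulation, unfounded. What the paper's inequality buys is generality: the paper's proof never uses that $\Gamma_2$ is a graph, and this matters because in \Cref{sec:mainpf} the corollary is applied iteratively with $\Gamma_2$ a \emph{directed cycle} of odd prime-power order, which is a proper digraph; there your proper-versus-graph dichotomy fails and $i_1>i_2$ is the hypothesis doing the real work (a vertex-transitive digraph with a cartesian factor of Cayley index $i_1$ has Cayley index at least $i_1$). So your route is a correct and slightly more elementary alternative for the literal statement, but the version it proves would not cover the paper's own application, while the paper's argument scales to $\Gamma_2$ an arbitrary Cayley digraph.
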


\begin{proof}
 By~\Cref{cartesian}, one of $\Gamma_1$ and $\overline{\Gamma_1}$ is prime with respect to the cartesian  product, say $\Gamma_1$ without loss of generality.  Clearly, we have  $\Aut(\Gamma_1)\times \Aut(\Gamma_2)\leq \Aut(\Gamma_1\square \Gamma_2)$. Since $i_1>i_2$, $\Gamma_1$ cannot be a cartesian factor of $\Gamma_2$.  It follows by~\Cref{strict-factorisation} that every automorphism of $\Gamma_1 \square \Gamma_2$ is a product of an automorphism of $\Gamma_1$ and an automorphism of $\Gamma_2$, so that  $\Aut(\Gamma_1)\times \Aut(\Gamma_2)=\Aut(\Gamma_1\square \Gamma_2)$.
\end{proof}

\section{ Additional background}\label{prelim}

The following lemma is well known and easy to prove.

\begin{lemma}\label{lemma:aut}
Let $G$ be a group, let $S\subseteq G$ and let $\alpha\in\Aut(G)$. If $S^\alpha=S$, then $\alpha$ induces an automorphism of $\Cay(G,S)$ which fixes the vertex corresponding to the identity.
\end{lemma}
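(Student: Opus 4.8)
The plan is to verify directly from the definitions that $\alpha$, regarded as a permutation of the vertex set $\V(\Gamma)=G$, preserves the arc set $\A(\Gamma)$. Since $\alpha\in\Aut(G)$, it is in particular a bijection of $G$ and therefore a permutation of $\V(\Gamma)$; so the only thing left to check is that it maps arcs to arcs (and, for the converse direction, non-arcs to non-arcs).

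The key computation uses that $\alpha$ is a group homomorphism. For any $u,v\in G$ we have
\[
v^\alpha (u^\alpha)^{-1} = v^\alpha (u^{-1})^\alpha = (vu^{-1})^\alpha.
\]
By the definition of the Cayley digraph, $(u,v)\in\A(\Gamma)$ precisely when $vu^{-1}\in S$. Since $S^\alpha=S$, the map $\alpha$ restricts to a bijection of $S$, so $vu^{-1}\in S$ if and only if $(vu^{-1})^\alpha\in S$. Combining these observations, $(u,v)\in\A(\Gamma)$ if and only if $v^\alpha (u^\alpha)^{-1}\in S$, that is, if and only if $(u^\alpha,v^\alpha)\in\A(\Gamma)$. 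Hence $\alpha$ is an automorphism of $\Gamma$.

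The only point worth flagging is that one genuinely uses $S^\alpha=S$ as an \emph{equality} of sets, not mere containment, so that the arc condition is preserved in both directions (equivalently, so that $\alpha^{-1}$ also preserves arcs); this is exactly what the hypothesis supplies. Finally, since $\alpha$ is a group automorphism it fixes the identity of $G$, which is the vertex corresponding to the identity, giving the last assertion. There is no real obstacle here: the statement reduces entirely to the single homomorphism identity displayed above, which is why it is described as well known and easy.
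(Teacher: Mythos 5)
Your proof is correct and complete: the single homomorphism identity $v^\alpha(u^\alpha)^{-1}=(vu^{-1})^\alpha$ together with the set equality $S^\alpha=S$ gives the arc-preservation in both directions, and $\alpha$ fixing the identity vertex is immediate. The paper states this lemma without proof (calling it well known and easy), and your direct verification is exactly the standard argument it has in mind, so there is nothing to compare beyond noting that your care in using $S^\alpha=S$ as an equality rather than a containment is the one point where a sloppier write-up could go wrong.
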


The next lemma is not used in any of our proofs, but it shows that the Cayley indices in~\Cref{main:theo} and~\Cref{prop:symn} are as small as possible.

\begin{lemma}\label{lemma:propsubgroup}
If $\Cay(G,S)$ has Cayley index $i$ and $\Aut(\Cay(G,S))$ has at least two regular subgroups, then $G$ has a proper subgroup of index at most $i$.
\end{lemma}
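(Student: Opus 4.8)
The plan is to work inside $A=\Aut(\Cay(G,S))$, where I identify $G$ with its right regular representation, so that $G$ is a regular subgroup of $A$ with $|A:G|=i$. By hypothesis there is a second regular subgroup $H\le A$ with $H\neq G$. The key point is simply that two distinct regular subgroups must have a small intersection, and this intersection will be the proper subgroup of $G$ we are looking for.

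First I would observe that, since both $G$ and $H$ act regularly on $\V(\Gamma)$, we have $|G|=|H|=|\V(\Gamma)|$. I then claim $G\cap H$ is a \emph{proper} subgroup of $G$: if instead $G\cap H=G$, then $G\le H$, and since $|G|=|H|$ this forces $G=H$, contradicting $H\neq G$. So $G\cap H$ is indeed proper in $G$, and it only remains to bound its index.

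To bound the index I would use the standard product-set count. Although $GH$ need not be a subgroup of $A$, as a subset of $A$ it satisfies $|GH|=|G|\,|H|/|G\cap H|$. Since $GH\subseteq A$ and $|A|=i\,|G|$, this gives
\[
\frac{|G|\,|H|}{|G\cap H|}=|GH|\le |A|=i\,|G|.
\]
Dividing by $|G|$ and using $|H|=|G|$ yields $|G:G\cap H|=|G|/|G\cap H|\le i$. Combined with the previous paragraph, $G\cap H$ is a proper subgroup of $G$ of index at most $i$, as required.

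There is no real obstacle here; the argument is elementary. The only points that warrant care are the two small verifications above, namely that $G\cap H$ is proper (which uses $|G|=|H|$ and $H\neq G$) and that the counting identity $|GH|=|G|\,|H|/|G\cap H|$ is applied as an identity of \emph{sets} inside $A$ rather than presuming $GH$ is a group. Neither of these presents any difficulty.
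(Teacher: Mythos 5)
Your proof is correct and follows essentially the same route as the paper: take a regular subgroup $H \neq G$, note $G \cap H$ is proper in $G$, and bound its index via the product-set identity $|GH| = |G||H|/|G\cap H| \le |A|$. The only difference is that you spell out the small verifications (that $G\cap H \neq G$ uses $|G|=|H|$, and that the counting identity holds for the set $GH$ even when it is not a subgroup) which the paper leaves implicit.
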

\begin{proof}
Let $A=\Aut(\Cay(G,S))$ and let $H$ be a regular subgroup of $A$ different from $G$. Clearly, $G\cap H$ is a proper subgroup of $G$ and we have $|A|\geq |GH|=\frac{|G||H|}{|G\cap H|}$ hence $i=|A:G|=|A:H|\geq |G:G\cap H|$.
\end{proof}

Generally, there are two notions of connectedness for digraphs: a digraph is \emph{weakly connected} if its underlying graph is connected, and \emph{strongly connected} if for every ordered pair of vertices there is a directed path from the first to the second. In a finite Cayley digraph, these notions coincide (see \cite[Lemma 2.6.1]{godsilroyle} for example). For this reason, we refer to Cayley digraphs as simply  being \emph{connected} or \emph{disconnected}.

If $v$ is vertex of a digraph $\Gamma$, then $\Gamma^+(v)$ denotes the {\emph{outneighbourhood}} of $v$, that is, the set of vertices $w$ of $\Gamma$ such that $(v,w)$ is an arc of $\Gamma$.

Let $A$ be a group of automorphisms of a digraph $\Gamma$.  For $v\in \V(\Gamma)$  and $i\geq 1$, we use $A_v^{+[i]}$ to denote the subgroup of $A_v$ that fixes every vertex $u$ for which there is a directed path of length at most $i$ from $v$ to $u$.

\begin{lemma}\label{lemma:fixNeighbourhood}
Let $\Gamma$ be a connected digraph, let $v$ be a vertex of $\Gamma$ and let $A$ be a transitive group of automorphisms of $\Gamma$. If $A_v^{+[1]}=A_v^{+[2]}$, then $A_v^{+[1]}=1$. 
\end{lemma}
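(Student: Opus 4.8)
The plan is to show that every element of $A_v^{+[1]}$ is the identity. The engine of the argument is a local-to-global propagation: I will prove that, under the hypothesis, the subgroup $A_x^{+[1]}$ is \emph{the same} for every vertex $x$ reachable from $v$, and then exploit that a single element lying in all of these subgroups must fix every vertex.

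First I would record two preliminary facts. Since $A$ is transitive, $\Gamma$ is vertex-transitive, and a finite weakly connected vertex-transitive digraph is strongly connected; hence every vertex $u$ can be reached from $v$ by a directed path. Secondly, transitivity makes the hypothesis symmetric in the base vertex: if $h\in A$ with $v^h=x$, then $h$ maps directed paths out of $v$ to directed paths out of $x$, so $(A_v^{+[1]})^h=A_x^{+[1]}$ and $(A_v^{+[2]})^h=A_x^{+[2]}$. Consequently the equality $A_v^{+[1]}=A_v^{+[2]}$ conjugates to $A_x^{+[1]}=A_x^{+[2]}$ at \emph{every} vertex $x$.

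The core step is the claim that for every arc $(x,y)$ of $\Gamma$ we have $A_x^{+[1]}=A_y^{+[1]}$. To obtain the containment $A_x^{+[1]}\le A_y^{+[1]}$, I note that any $g\in A_x^{+[2]}$ fixes $y$ (a vertex at directed distance $1$ from $x$) and fixes every vertex of $\Gamma^+(y)$ (each at directed distance $2$ from $x$ along $x\to y\to\cdot$), so $g$ fixes $y$ together with $\Gamma^+(y)$, that is, $g\in A_y^{+[1]}$; combined with the now-universal equality $A_x^{+[1]}=A_x^{+[2]}$ this gives $A_x^{+[1]}\le A_y^{+[1]}$. For the reverse inclusion I use transitivity again: choosing $h\in A$ with $x^h=y$ yields $(A_x^{+[1]})^h=A_y^{+[1]}$, so these two subgroups have equal (finite) order, and a containment of finite groups of equal order is an equality. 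This establishes $A_x^{+[1]}=A_y^{+[1]}$.

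Finally I would propagate along a directed path. Given any vertex $u$, strong connectedness supplies a directed path $v=x_0\to x_1\to\cdots\to x_k=u$, and applying the core step to each arc gives $A_v^{+[1]}=A_{x_1}^{+[1]}=\cdots=A_u^{+[1]}$. Hence a fixed $g\in A_v^{+[1]}$ lies in $A_u^{+[1]}\le A_u$ for every vertex $u$, so $g$ fixes all of $\V(\Gamma)$ and therefore $g=1$, proving $A_v^{+[1]}=1$. I expect the only genuinely delicate point to be the core step, specifically the passage from the containment $A_x^{+[1]}\le A_y^{+[1]}$ to equality: this is exactly where finiteness and transitivity are indispensable, since without the order comparison the containment alone would not propagate in both directions along a path, and the induction could not close.
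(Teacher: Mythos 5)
Your proof is correct and runs on the same engine as the paper's: transitivity transfers the hypothesis $A_x^{+[1]}=A_x^{+[2]}$ to every vertex $x$, the fixing then propagates one step at a time along directed paths, and connectedness closes the argument. The bookkeeping differs slightly --- the paper fixes the base vertex $v$ and inducts on the radius, showing $A_v^{+[i]}=A_v^{+[i+1]}$ for every $i\geq 1$, whereas you fix the radius at $1$ and move the base vertex along arcs, proving $A_x^{+[1]}=A_y^{+[1]}$ for each arc $(x,y)$ --- but these are two presentations of the same idea, and your version of the strong-connectedness point (weak connectedness plus vertex-transitivity forces strong connectedness in a finite digraph, since in-degrees equal out-degrees) is the same fact the paper relies on. One substantive correction: the step you single out as ``genuinely delicate'' --- the reverse inclusion via conjugation and order-counting --- is in fact superfluous, and your claim that without it ``the induction could not close'' is mistaken. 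The one-sided containment $A_x^{+[1]}\le A_y^{+[1]}$ already propagates along any directed path $v=x_0\to x_1\to\cdots\to x_k=u$ to give $A_v^{+[1]}\le A_u^{+[1]}\le A_u$, so every $g\in A_v^{+[1]}$ fixes every vertex $u$ reachable from $v$, and strong connectedness then forces $g=1$. Thus the finite-group order comparison buys you nothing here; what is indispensable is only transitivity (to spread the hypothesis to every vertex) and directed reachability of every vertex from $v$.
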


\begin{proof}
By the transitivity of $A$, we have $A_u^{+[1]}=A_u^{+[2]}$ for every vertex $u$. Using induction on $i$, it easily follows that, for every $i\geq 1$, we have $A_v^{+[i]}=A_v^{+[i+1]}$. By connectedness, this implies that  $A_v^{+[1]}=1$.
\end{proof}

\begin{lemma}\label{lemma:nonabelian}
Let $p$ be a prime and let $A$ be a permutation group whose order is a power of $p$.  If $A$ has a regular abelian subgroup $G$ of index $p$ and $G$ has a subgroup $M$ of index $p$  that is normalised but not centralised by  a point-stabiliser  in $A$, then $A$ has a regular nonabelian subgroup. 
\end{lemma}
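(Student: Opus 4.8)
The plan is to first pin down the structure of $A$. Since $A$ is a $p$-group and $|A:G|=p$, the subgroup $G$ is normal in $A$. As $G$ is regular, a point-stabiliser $A_v$ satisfies $A_v\cap G=1$ and $|A_v|=|A:G|=p$, so $A_v=\langle a\rangle$ is cyclic of order $p$ and $A=G\rtimes\langle a\rangle$. By hypothesis $a$ normalises but does not centralise $M$. Since $G$ is abelian it centralises $M$, so $M$ is normalised by both $G$ and $a$, and hence $M\trianglelefteq A$; note $|A:M|=p^2$.

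The key reformulation I would use is that, because $|A|=p\,|G|$ and any regular subgroup has order equal to the degree $|G|$, a subgroup $R\le A$ is regular precisely when $|R|=|G|$ and $R\cap A_v=1$; since $|A_v|=p$ this last condition is simply $a\notin R$. So it suffices to produce an index-$p$ subgroup of $A$ that avoids $a$ and is nonabelian.

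For the construction, fix any $z\in G\setminus M$ and set $R=\langle M,za\rangle$. Because $za$ normalises $M$ (as both $z\in G$ and $a$ do), we have $R=M\langle za\rangle$ with $M\trianglelefteq R$, and I would then check the three required properties. First, $|R|=|G|$: working modulo $M$, the image of $za$ has order $p$, since $za\notin G\supseteq M$ while $(za)^p=\prod_{i=0}^{p-1}a^{i}za^{-i}$ lies in $M$ (its image in the abelian group $A/M$ is the $p$-th power of the image of $z$, which is trivial as $|G/M|=p$); hence $|R/M|=p$ and $|R|=p|M|=|G|$. Second, $a\notin R$: otherwise $z=(za)a^{-1}\in R$, so $R$ would contain $\langle M,z\rangle=G$ (as $M$ is maximal in $G$) together with $a$, forcing $R=A$ and contradicting $|R|=|G|$. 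Third, $R$ is nonabelian: since $z$ centralises $M$ while $a$ does not, the element $za\in R$ fails to centralise $M\le R$. Combined with the reformulation, $R$ is the desired regular nonabelian subgroup.

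The only delicate point is verifying $|R|=|G|$: the element $za$ may have order larger than $p$ in $A$ (depending on the exponent of $G$), so one cannot simply assert that $\langle za\rangle$ has order $p$ and complements $M$. Passing to the quotient $A/M$, where $za$ has order exactly $p$, sidesteps this computation cleanly, and is the main reason for first establishing that $M\trianglelefteq A$.
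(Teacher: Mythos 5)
Your proof is correct and takes essentially the same approach as the paper: both construct $R=\langle M, za\rangle$ (the paper's $H=\langle M,g\alpha\rangle$) for $z\in G\setminus M$, show $|R|=|G|$ via $(za)^p\in M$ (the paper deduces $A/M$ is elementary abelian from its two subgroups of order $p$, while you use that $|A/M|=p^2$ forces abelianness together with $a^p=1$ and $z^p\in M$), and show $R$ is nonabelian because $za$ normalises but fails to centralise $M$. The only real divergence is in verifying regularity, and it is minor: the paper notes $R$ is normal of order $|G|$, so if nonregular it would contain every point-stabiliser, hence $\alpha$ and then $g$, a contradiction, whereas you invoke the equivalent criterion that $|R|=|G|$ together with $R\cap A_v=1$ (i.e.\ $a\notin R$, since $A_v$ has prime order) already forces transitivity and hence regularity.
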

\begin{proof}
Let $A_v$ be a point-stabiliser in $A$. Note that $A=G \rtimes A_v$ and that $|A_v|=p$.   Since $M$ is normal in $G$ and normalised by $A_v$, it is normal in $A$ and has index $p^2$. Clearly, $M\rtimes A_v\neq G$ hence $A/M$ contains at least two subgroups of order $p$ and must therefore be elementary abelian.

Let $\alpha$ be a generator of $A_v$ and let $g\in G-M$. By the previous paragraph, we have  $(g\alpha)^p \in M$.  Let  $H=\langle M,g\alpha\rangle$. Since $M$ is centralised by $g$ but not by $\alpha$, it is not centralised by $g\alpha$ hence $H$ is nonabelian. Further, we have $|H|=p|M| = |G|$, so that $H$ is normal in $A$. If  $H$ was non-regular, it would contain all point-stabilisers of $A$, and thus would contain $\alpha$ and hence also $g$. This would  give $G=\langle M, g \rangle \leqslant H$, a contradiction. Thus $H$ is a regular nonabelian subgroup of $A$.
\end{proof}

\section{Proof of \Cref{main:theo}}\label{sec:main}

Throughout this section, $p$ denotes an odd prime. In~\Cref{sec:Z_p^3}, we show that \Cref{main:theo} holds when $G\cong \ZZ_p^3$. In Sections~\ref{sec:Z_p^n x Z_p} and~\ref{sec:rk 2a}, we subdivide abelian groups of rank $2$ and order at least $p^3$ into two families,  and show that the theorem holds for all such groups. Finally, in~\Cref{sec:mainpf}, we explain how these results can be applied to show that the theorem holds for all abelian groups of order at least $p^3$.

\subsection{\mathversion{bold}$G\cong\ZZ_p^3$}\label{sec:Z_p^3}\label{sec:z_p^3}
Write $G=\langle x,y,z\rangle$, let $\alpha$ be the automorphism of $G$ that maps $(x,y,z)$ to $(xy,yz,z)$, let $S=\{x^{\alpha^i},y^{\alpha^i}:i \in \ZZ\}$, let $\Gamma=\Cay(G,S)$, and let $A=\Aut(\Gamma)$. Note that $\Gamma$ is a proper digraph (this will be needed in \Cref{sec:mainpf}).

It is easy to see that, for $i\in\NN$, we have $x^{\alpha^i}=xy^iz^{\binom{i}{2}}$, $y^{\alpha^i}=yz^i$ and $z^{\alpha^i}=z$. In particular, $\alpha$ has order $p$ and $|S|=2p$. By \Cref{lemma:aut}, $G\rtimes \langle \alpha\rangle\leq A$. We will show that equality holds.

Using the formulas above, it is not hard to see that  the induced digraph on $S$ has exactly $2p$ arcs:  $(x^{\alpha^i},x^{\alpha^{i+1}})$ and $(y^{\alpha^i},x^{\alpha^{i+1}})$, where $i\in \ZZ_p$.  Thus,
 for every $s \in S$, $A_{1,s}=A_1^{+[1]}$. By vertex-transitivity, $A_{u,v}=A_u^{+[1]}$ for every arc $(u,v)$.

Let $s\in S$.  We have already seen that $A_{1,s}=A_1^{+[1]}$. Let $t\in S$. From the structure of the induced digraph on $S=\Gamma^{+}(1)$, we see that $t$ has an out-neighbour in $S$, so that both $t$ and this out-neighbour are fixed by $A_{1,s}$. It follows that $A_{1,s}$ fixes all out-neighbours of $t$. We have shown that $A_{1,s}=A_1^{+[2]}$.  By \Cref{lemma:fixNeighbourhood}, it follows that $A_{1,s}=1$. Since   the induced digraph on $S$ is not vertex-transitive and  $\alpha \in A_1$,  the $A_1$-orbits on $S$ have length $p$. Hence $|A_1|=p|A_{1,s}|=p$. Thus, $\Gamma$ has Cayley index $p$ and $A=G\rtimes \langle \alpha\rangle$. Finally, we apply~\Cref{lemma:nonabelian} with $M=\langle y,z\rangle$ to deduce that $A$ contains a nonabelian regular subgroup.

\subsection{\mathversion{bold}$G\cong\ZZ_{p^n} \times \ZZ_{p}$ with $n \geqslant 2$}\label{sec:Z_p^n x Z_p}\label{sec:rk 2 exp>p}

Write $G=\langle x,y\rangle$, let $x_0=x^{p^{n-1}}$, let $\alpha$ be the  automorphism of $G$ that maps $(x,y)$ to $(xy,x_0y)$, let $S=\{x^{\alpha^i},y^{\alpha^i} :i \in \ZZ\}$ and let $\Gamma=\Cay(G,S)$. Again, note that $\Gamma$ is a proper digraph.

Since $n\geq 2$, $x_0$ is fixed by $\alpha$. It follows that, for $i\in\NN$, we have $x^{\alpha^i}=xy^ix_0^{\binom{i}{2}}$ and $y^{\alpha^i}=yx_0^i$. In particular, $\alpha$ has order $p$ and $|S|=2p$. 

Using these formulas, it is not hard to see that the induced digraph on $S$ has exactly $2p$ arcs:  $(x^{\alpha^i},x^{\alpha^{i+1}})$ and $(y^{\alpha^i},x^{\alpha^{i+1}})$, where $i\in \ZZ_p$. The proof is now exactly as in the previous section, except that we use $M=\langle x^p,y\rangle$ when applying~\Cref{lemma:nonabelian}.

\subsection{\mathversion{bold}$G\cong\ZZ_{p^n} \times \ZZ_{p^m}$ with $n\geqslant m \geqslant 2$}\label{sec:rk 2a}

Write $G=\langle x,y\rangle$, let $x_0=x^{p^{n-1}}$, let $y_0=y^{p^{m-1}}$,  let $\alpha$ be the  automorphism of $G$ that maps $(x,y)$ to $(xy_0,yx_0)$, let $S=\{x^{\alpha^i},y^{\alpha^i}, (xy^{-1})^{\alpha^i}:i \in \ZZ\}$, let $\Gamma=\Cay(G,S)$, and let $A=\Aut(\Gamma)$. Again, note that $\Gamma$ is a proper digraph.

Since $n\geqslant m \geqslant 2$,  $x_0$ and $y_0$ are both fixed by $\alpha$. It follows that, for $i\in\NN$, we have $x^{\alpha^i}=xy_0^i$, and $y^{\alpha^i}=yx_0^i$. In particular, $\alpha$ has order $p$ and $|S|=3p$. By \Cref{lemma:aut}, $G\rtimes \langle \alpha\rangle\leq A$.

Using the formulas above, it is not hard to see that the induced digraph on $S$ has exactly $2p$ arcs:  $((xy^{-1})^{\alpha^i},x^{\alpha^i})$ and $(y^{\alpha^i},x^{\alpha^{i}})$, where $i\in \ZZ_p$. It follows that $|A_1:A_{1,x}|=p$. We will show that $A_{1,x}=1$, which will imply that $A=G\rtimes \langle \alpha\rangle$.

Let $X=\{x^{\alpha^i}:i\in\ZZ\}=x\langle y_0\rangle$, $Y=\{x^{\alpha^i}:i\in\ZZ\}=y \langle x_0\rangle$ and $Z=\{(xy^{-1})^{\alpha^i}:i\in\ZZ\}=xy^{-1}\langle x_0^{-1}y_0\rangle$. It follows from the previous paragraph that $X$ is an orbit of $A_1$ on $S$. 

Note that the $p$ elements of $Y^2=y^2\langle x_0\rangle$ are out-neighbours of every element of $Y$. Similarly, the $p$ elements of $Z^2$ are out-neighbours of every element of $Z$. On the other hand, one can check that an element of $Y$ and an element of $Z$ have a unique out-neighbour in common, namely their product. This shows that $Y$ and $Z$ are blocks for $A_1$. We claim that $Y$ and $Z$ are orbits of $A_1$.

Let $Y_1=Y$ and, for $i\geq 2$, inductively define $Y_i= \bigcap_{x\in Y_{i-1}} \Gamma^+(x)$. Define $Z_i$ analogously. Let $g\in A_1$. By induction, $Y_i^g \in \{Y_i, Z_i\}$, and $Y_i^g = Z_i$  if and only if $Y^g = Z$. Note that $Y_i = Y^i=y^i \langle x_0 \rangle$ and $Z_i = Z^i=x^iy^{-i}\langle x_0^{-1}y_0 \rangle$.  If $n=m$, then $1\in x_0y_0^{-1}\langle x_0^{-1}y_0\rangle=Z_{p^{m-1}}$, but $1\notin y_0\langle x_0\rangle=Y_{p^{m-1}}$, so $Y$ and $Z$ are orbits for $A_1$. We may thus assume that $n>m$. Note that $y_0\in y_0 \langle x_0\rangle=Y_{p^{m-1}}$, and $y_0$ is an in-neighbour of $x\in X$. However, $Z_{p^{m-1}}=x^{p^{m-1}}y_0^{-1} \langle x_0^{-1}y_0\rangle$.  Since $n>m$, we see that no vertex of $Z_{p^{m-1}}$ is an in-neighbour of a vertex of $X$.  Again it follows that $Y$ and $Z$ are orbits for $A_1$.

Considering the structure of the induced  digraph on $S$, it follows that, for every $s \in S$, $A_{1,s}=A_1^{+[1]}$. By vertex-transitivity, $A_{u,v}=A_u^{+[1]}$ for every arc $(u,v)$.  Since elements of $Y$ and $Z$ have an out-neighbour in $S$, $A_{1,x}$ fixes the out-neighbours of elements of $Y$ and $Z$. Furthermore, for every $i\in\ZZ$,  $xy_0^iy$ is a common outneighbour of $xy_0^i$ and $y$, hence it is fixed by $A_{1,x}$. Thus, every element of $X$ has an out-neighbour fixed by $A_{1,x}$. It follows that $A_{1,x}$ fixes all out-neighbours of elements of $X$ and thus $A_{1,x}=A_1^{+[1]}=A_1^{+[2]}$. By \Cref{lemma:fixNeighbourhood}, it follows that $A_{1,x}=1$. As in~\Cref{sec:Z_p^3}, we can also conclude $|A_1|=p$, $\Gamma$ has Cayley index $p$ and $A=G\rtimes \langle \alpha\rangle$. Finally, applying \Cref{lemma:nonabelian} with $M=\langle x^p,y\rangle$ implies that $A$ contains a nonabelian regular subgroup.

\subsection{General case}\label{sec:mainpf}
Recall that $G$ is an abelian $p$-group that has order at least $p^3$ and is not cyclic. By the Fundamental Theorem of Finite Abelian Groups, we can write $G=G_1\times G_2$, where $G_1$ falls into one of the three cases that have already been dealt with in this section.

(More explicitly, if $G$ is not elementary abelian, then we can take $G_1$ isomorphic to $\ZZ_{p^n}\times \ZZ_{p^m}$ with $n \ge 2$ and $m \ge 1$. If $G$ is elementary abelian, then, since $|G|\geq p^3$, we can take $G_1$ isomorphic to $\ZZ_p^3$.)

We showed in the previous three sections that there exists a proper Cayley digraph $\Gamma_1$ on $G_1$ with Cayley index equal to $p$ and whose automorphism group contains a nonabelian regular subgroup.

Note that every cyclic group admits a Cayley digraph whose Cayley index is  $1$. (For example, the directed cycle of the corresponding order.) Since $G_2$ is a direct product of cyclic groups, applying~\Cref{thm:non-exploder} iteratively yields a proper Cayley digraph $\Gamma$ on $G_1\times G_2$ with automorphism group $\Aut(\Gamma_1)\times G_2$. In particular, $\Gamma$ has Cayley index $p$ and its automorphism group contains a nonabelian regular subgroup. This concludes the proof of \Cref{main:theo}.

In fact, the proof above yields the following stronger result.

\begin{theorem}
Let $G$ be an abelian group. If there is an odd prime $p$ such that the Sylow $p$-subgroup of $G$ is neither cyclic nor elementary abelian of rank $2$, then $G$ admits a proper Cayley digraph with Cayley index $p$ whose automorphism group contains a nonabelian regular subgroup. 
\end{theorem}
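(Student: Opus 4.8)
The plan is to reduce to the three $p$-group cases settled in the previous subsections, exactly as in the general case treated in \Cref{sec:mainpf}, after first splitting off the part of $G$ coprime to $p$. Since $G$ is abelian, I would write $G=P\times Q$, where $P$ is the Sylow $p$-subgroup of $G$ and $Q$ is the direct product of the remaining Sylow subgroups. The entire complication of the hypothesis is concentrated in $P$; the coprime part $Q$ will simply be carried along.

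The key structural step is to produce a direct factor $G_1$ of $P$ that falls into one of the three cases. By the Fundamental Theorem of Finite Abelian Groups, write $P\cong \ZZ_{p^{a_1}}\times\cdots\times \ZZ_{p^{a_k}}$ with $a_1\geq\cdots\geq a_k\geq 1$. The hypothesis that $P$ is neither cyclic nor elementary abelian of rank $2$ says precisely that $k\geq 2$ and that we do not have both $k=2$ and $a_1=a_2=1$. If $a_1\geq 2$, I take $G_1\cong \ZZ_{p^{a_1}}\times \ZZ_{p^{a_2}}$, which is covered by \Cref{sec:Z_p^n x Z_p} (if $a_2=1$) or \Cref{sec:rk 2a} (if $a_2\geq 2$). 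Otherwise $P$ is elementary abelian, so every $a_i=1$; since $P$ is not cyclic and not of rank $2$, we get $k\geq 3$, and I take $G_1\cong \ZZ_p^3$, covered by \Cref{sec:Z_p^3}. In each case $P=G_1\times P'$ for some complement $P'$, and the relevant subsection supplies a proper Cayley digraph $\Gamma_1$ on $G_1$ with Cayley index $p$ whose automorphism group contains a nonabelian regular subgroup $R$.

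It then remains to assemble a digraph on all of $G$. I would set $G_2=P'\times Q$, so that $G=G_1\times G_2$ and $G_2$, being a finite abelian group, is a direct product of cyclic groups. Applying \Cref{thm:non-exploder} iteratively, exactly as in \Cref{sec:mainpf}, I would combine $\Gamma_1$ with a directed cycle (a Cayley digraph of Cayley index $1$) on each cyclic factor of $G_2$; since the Cayley index $p$ of $\Gamma_1$ strictly exceeds $1$, the corollary applies at every stage, and at each stage the resulting digraph is again a proper Cayley digraph of Cayley index $p$, so the iteration may continue. The result is a proper Cayley digraph $\Gamma$ on $G$ with $\Aut(\Gamma)=\Aut(\Gamma_1)\times G_2$, whence $\Gamma$ has Cayley index $p$, and $R\times G_2$ is a nonabelian regular subgroup of $\Aut(\Gamma)$.

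Since this is essentially bookkeeping layered on top of the work already done, I do not expect a genuine obstacle. The only point requiring real care is the structural case analysis guaranteeing the good direct factor $G_1$: one must verify that excluding cyclic and rank-$2$ elementary abelian Sylow $p$-subgroups is exactly the condition needed to land in one of the three established cases, and confirm that the coprime part $Q$ is harmless because it is absorbed into $G_2$ by the cartesian-product construction rather than interfering with the automorphism-group computation.
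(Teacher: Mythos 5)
Your proposal is correct and is essentially the paper's own argument: the paper derives this theorem from the proof in \Cref{sec:mainpf} by taking $G_1$ to be a suitable direct factor of the Sylow $p$-subgroup (exactly your case analysis, matching the paper's parenthetical remark) and absorbing everything else, including the part of $G$ coprime to $p$, into $G_2$ before iterating \Cref{thm:non-exploder} with directed cycles. Your write-up just makes explicit the $P\times Q$ splitting and the verification that the exclusions (cyclic, or elementary abelian of rank $2$) are precisely what is needed to land in one of the three settled cases, which the paper leaves implicit.
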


\section{Proof of \Cref{prop:symn}}\label{sec:prop}

We begin with a lemma that helps to establish the existence of regular subgroups.

\begin{lemma}\label{second-subgroup}
 Let $G$ be a group with nontrivial subgroups $H$ and $B$ such that $G=HB$ and $H \cap B=1$, and  let $\Gamma=\Cay(G,S)$ be a Cayley digraph on $G$. If $S$ is closed under conjugation by $B$, then $\Aut(\Gamma)$ has a regular subgroup distinct from the right regular representation of $G$ and isomorphic to $H \times B$.
\end{lemma}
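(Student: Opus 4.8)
The plan is to build the new regular subgroup out of the right regular representation of $H$ together with the \emph{left} regular representation of $B$, the point being that although left multiplications are not automorphisms of a Cayley digraph in general, the hypothesis on $S$ forces them to be here. Write $\rho_g\colon x\mapsto xg$ for the right multiplications, so that $G_R=\{\rho_g:g\in G\}$ is the right regular representation, which lies in $\Aut(\Gamma)$ by the definition of a Cayley digraph. For $b\in B$, let $c_b\colon x\mapsto bxb^{-1}$ be conjugation by $b$; this is an automorphism of $G$, and the assumption that $S$ is closed under conjugation by $B$ says exactly that $S^{c_b}=S$. Hence \Cref{lemma:aut} shows that $c_b$ induces an element of $\Aut(\Gamma)$ fixing the identity vertex. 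The key observation is then that left multiplication $\lambda_b\colon x\mapsto bx$ factors as $\lambda_b=\rho_b\circ c_b$, so $\lambda_b\in\Aut(\Gamma)$ for every $b\in B$.

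Next I would assemble the subgroup. Let $R_H=\{\rho_h:h\in H\}$ and $L_B=\{\lambda_b:b\in B\}$; these are subgroups of $\Aut(\Gamma)$ isomorphic to $H$ and to $B$ respectively, and they centralise each other because left and right multiplications always commute. A left multiplication equals a right multiplication only when the multiplier is central, and comparing images of the identity shows that $R_H\cap L_B$ is trivial; hence $K:=R_HL_B$ is a subgroup isomorphic to $H\times B$, of order $|H|\,|B|=|HB|=|G|$. To see that $K$ is regular it suffices to check transitivity: since the two factors commute every element of $K$ has the form $\rho_h\lambda_b$, and its image of the identity vertex is $bh$, so the orbit of the identity is $\{bh:b\in B,\ h\in H\}=BH$. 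As $|BH|=|HB|=|G|$, this orbit is all of $G$, and a transitive group whose order equals its degree is regular. Thus $K$ is a regular subgroup of $\Aut(\Gamma)$ isomorphic to $H\times B$.

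It remains to argue that $K\neq G_R$, and this is the step that requires the most care. Since $R_H\leq G_R$, we have $K=G_R$ precisely when $L_B\leq G_R$, that is, when every $\lambda_b$ is a right multiplication; by the identity-comparison above this happens exactly when $B\leq Z(G)$. Thus $K$ is genuinely distinct from the right regular representation as soon as $B$ contains a non-central element, which is the relevant situation in the intended applications, where $B$ acts nontrivially by conjugation. I expect the main conceptual obstacle to be the recognition that $\lambda_b\in\Aut(\Gamma)$, via the factorisation $\lambda_b=\rho_b\circ c_b$ combined with \Cref{lemma:aut}; once this is in hand, the regularity, the isomorphism type, and the distinctness are all routine permutation-group bookkeeping.
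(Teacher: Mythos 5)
Your construction coincides with the paper's proof step for step: the authors likewise observe that left translation by $b\in B$ factors as a right translation composed with a conjugation (in their notation $r_gc_{g^{-1}}=\ell_g$), deduce from the closure of $S$ under $B$-conjugation that these left translations lie in $\Aut(\Gamma)$, form $K=\langle L_B,R_H\rangle$, check $R_H\cap L_B=1$ by evaluating at the identity vertex, and obtain regularity from $|K|=|G|$ together with triviality of the stabiliser of $1$ (your orbit count $1^{r_h\ell_b}=bh$ is the same computation read in the other direction). The only divergence is the distinctness step, and there you are in fact more careful than the paper. The paper argues: if $K=R_G$ then $L_B\leq R_G$, hence $C_B\leq R_G$, ``contradicting the fact that $R_G$ is regular.'' But $C_B\leq R_G$ only forces each $c_b$, which fixes $1$, to be the identity, i.e.\ $B\leq Z(G)$ --- and that is a contradiction only when $B$ is not central, which is nowhere among the hypotheses.

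So your equivalence ($K=G_R$ if and only if $B\leq Z(G)$) is exactly right, and the caveat you flag is a genuine gap --- but it is a gap in the statement and in the paper's proof, not one you have introduced. Indeed the lemma as stated is false for central $B$: take $G=\ZZ_5\times\ZZ_5=H\times B$ and let $S$ be the connection set of a digraphical regular representation of $G$ (such exists by Babai's theorem \cite{babaiDRR}); the hypotheses hold vacuously since $G$ is abelian, yet $\Aut(\Gamma)=R_G$ is itself regular and so contains no regular subgroup other than $R_G$. Read strictly as a proof of the lemma as stated, your argument is therefore incomplete in the same place the paper's is, and the honest fix is to add the hypothesis $B\not\leq Z(G)$ (or that $B$ acts nontrivially by conjugation on $G$). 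This costs nothing downstream: in the applications via \Cref{Order2Order3}, \Cref{prop:symn} and \Cref{newprop}, one has $B=\langle x\rangle$ or $B=\langle (1\ 3)\rangle$ with $y^x\neq y$, respectively with $(1\ 2)^{(1\ 3)}=(2\ 3)$, so $B$ is non-central and your version of the lemma suffices.
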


\begin{proof} 
Let $A=\Aut(\Gamma)$. For $g\in G$, let $\ell_g$ and $r_g$  denote the permutations of $G$ induced by left and  right multiplication by $g$,  respectively. Similarly, for $g\in G$, let $c_g$ denote the permutation of $G$ induced by conjugation by $g$. For  $X \leqslant G$, let $R_X=\langle r_x: x \in X \rangle$. Let  $L_B=\langle \ell_b: b \in B\rangle $ and  $C_B=\langle c_b: b \in B \rangle$. Note that $R_H \leqslant A$. For  every $g \in G$, $r_gc_{g^{-1}}=\ell_g$. For all $b\in B$, we have $r_b \in A$ and, since $S$ is closed under conjugation by $B$,  $c_{b^{-1}} \in A$ hence   $L_B\leq A$. 

Let $K = \langle  L_B, R_H\rangle$.   If $K=R_G$, then $L_B\leq R_G$ which implies that $C_B\leq R_G$, contradicting the fact that $R_G$ is regular. Thus $K\neq R_G$. Note that  $L_B$ and  $R_H$  commute. Suppose that $k \in R_H \cap L_B$, so $k=r_h=\ell_b$ for some $h \in H$ and some $b\in B$. Thus 
$$h = 1^{r_h}=1^k = 1^{\ell_b} = b.$$

Since $H\cap B=1$, this implies $k=1$. It follows that $R_H \cap L_B=1$ and hence $K=R_H\times L_B \cong H\times B$. Finally, suppose that some $k=r_h \ell_b\in K$ fixes $1$. It follows that $1^{r_h \ell_b}=1=bh$ so that $b\in H$, a contradiction. This implies that $K$ is regular, which concludes the proof.
\end{proof}

We now prove a general result, which together with~\Cref{second-subgroup} will imply~\Cref{prop:symn}.

\begin{prop}\label{Order2Order3}
Let $G$ be a group generated by an involution $x$ and an element $y$ of order $3$, let $S=\{ x, y ,y^x \}$ and let $\Gamma=\Cay(G,S)$. If $G$ is isomorphic to neither $\ZZ_6$ nor $\ZZ_3\wr\ZZ_2\cong\ZZ_3^2\rtimes\ZZ_2$, then $\Gamma$ has Cayley index $2$. 
\end{prop}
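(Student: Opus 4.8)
The plan is to show that the stabiliser $A_1$ of the identity vertex in $A=\Aut(\Gamma)$ has order exactly $2$; since $A$ contains the right regular representation of $G$, this is equivalent to the Cayley index being $2$. Write $w=y^x=xyx$, so that $S=\{x,y,w\}$. For the lower bound I would first note that conjugation by $x$ fixes $S$ setwise (it fixes $x$ and interchanges $y$ and $w$), so by \Cref{lemma:aut} it induces an automorphism $c_x\in A_1$; this is nontrivial precisely when $x$ is noncentral, that is, when $G$ is nonabelian, which holds for every admissible $G$ except $\ZZ_6$. Hence $|A_1|\ge 2$, and it remains to prove $|A_1|\le 2$.

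The key structural observation is that the digons of $\Gamma$ (pairs of vertices joined by arcs in both directions) form a perfect matching. Indeed, $1$ and $s\in S$ form a digon exactly when $s\in S\cap S^{-1}$; since $x^{-1}=x$ while $y^{-1}=y^2$ and $w^{-1}=w^2$ lie in $S$ only under the relation $w=y^{-1}$, I would treat the case $w=y^{-1}$ (which forces $G\cong S_3$) separately and otherwise assume $S\cap S^{-1}=\{x\}$. Then the digons are exactly the pairs $\{g,xg\}$, i.e.\ the orbits of left multiplication $\ell_x$ by $x$. As every automorphism permutes digons, it preserves this matching and therefore commutes with the fixed-point-free involution $\ell_x$. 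In particular $A_1$ fixes $x=\ell_x(1)$, so the image of the action $A_1\to\Sym(S)$ lies in $\langle(y\,w)\rangle\cong\ZZ_2$, and its kernel is $A_1^{+[1]}$.

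It then suffices to prove $A_1^{+[1]}=1$, for which I would establish $A_1^{+[1]}=A_1^{+[2]}$ and invoke \Cref{lemma:fixNeighbourhood} (noting that $A$ is transitive and $\Gamma$ is connected, as $\langle S\rangle=G$). Let $g\in A_1^{+[1]}$, so $g$ fixes $1,x,y,w$. Because $g$ commutes with $\ell_x$, it also fixes $\ell_x(y)=xy$ and $\ell_x(w)=xw=yx$; together with $1$ these are all the out-neighbours of $x$, so $g$ fixes them. Among the out-neighbours $\{xy,y^2,wy\}$ of $y$, the point $y^2=y^{-1}$ is an in-neighbour of $1$ whereas a short computation shows $wy$ is not; since $g$ fixes $1$ and $y$ it preserves this property, so $g$ fixes $y^2$ and hence the remaining vertex $wy$. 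The out-neighbours of $w$ are handled symmetrically via $c_x$. Thus $g$ fixes the entire second out-neighbourhood, giving $A_1^{+[1]}=A_1^{+[2]}$ and hence $A_1^{+[1]}=1$. Combined with the previous paragraph this yields $|A_1|\le 2$, and therefore $|A_1|=2$.

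The main obstacle is the rigidity bookkeeping in the last step: everything hinges on the asymmetry between $S$ and $S^{-1}$—that $x$ is the unique element of $S\cap S^{-1}$, and that $y^{-1}$ (but not $wy$) is an in-neighbour of $1$—and this asymmetry can degenerate for small groups, so the generic argument must be supplemented by a direct analysis of the exceptional cases. The group $\ZZ_6$, where $c_x$ is trivial and $|S|=2$, is genuinely excluded; $S_3$, where $S=S^{-1}$ so that $\Gamma$ is the prism $\K_3\square\K_2$ (whose automorphism group has order $12$) and the digon argument does not apply, must be checked by hand and still gives Cayley index $2$; and $\ZZ_3\wr\ZZ_2$, where $\langle y,y^x\rangle$ is abelian, is the remaining case requiring separate scrutiny. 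Determining exactly which coincidences among $x,y,w$ can occur, and confirming the precise list of groups to exclude or treat directly, is where I expect the bulk of the effort to lie.
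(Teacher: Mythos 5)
Your proof is correct, and it reaches the paper's conclusion by a partly different mechanism. The overall skeleton is the same as the paper's: $c_x$ gives $|A_1|\geq 2$; the case $y^x=y^{-1}$ (forcing $G\cong\Sym(3)$) is checked by hand; and the upper bound comes from showing $A_1^{+[1]}=A_1^{+[2]}$ and applying \Cref{lemma:fixNeighbourhood}. Where you diverge is in the rigidity bookkeeping. The paper verifies that the seven vertices $1,yx,xy,y^2,y^xy,yy^x,(y^2)^x$ are pairwise distinct---this is exactly where the hypothesis $G\ncong\ZZ_3\wr\ZZ_2$ is consumed, via $y^xy\neq yy^x$---and then fixes the second neighbourhood vertex by vertex through uniqueness statements such as ``$xy$ is the unique common out-neighbour of $x$ and $y$''. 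You instead observe that outside the $\Sym(3)$ case $S\cap S^{-1}=\{x\}$, so the digons form the perfect matching $\{g,xg\}$ and every automorphism centralises $\ell_x$; this fixes $x$, $xy$ and $yx$ in one stroke and reduces the remaining work to separating $y^2$ from $y^xy$ (and $(y^2)^x$ from $yy^x$) by membership in the in-neighbourhood of $1$, which needs only $y^x\notin\{y,y^{-1}\}$. A consequence you did not notice: your generic argument never invokes $y^xy\neq yy^x$, so it applies verbatim to $\ZZ_3\wr\ZZ_2$, and your closing worry that this group ``requires separate scrutiny'' is doubly moot---it is excluded by hypothesis, and in fact for it the digraph is isomorphic to $\vec{C}_3\,\square\,\vec{C}_3\,\square\,\K_2$ (each coset of $\langle y,y^x\rangle$ induces $\Cay(\ZZ_3^2,\{y,y^x\})\cong\vec{C}_3\,\square\,\vec{C}_3$, and the $x$-digons straighten into a $\K_2$ factor after relabelling $vx$ as $v^xx$), whose automorphism group has order $36$ by \Cref{strict-factorisation}, so the Cayley index there is $2$ as well; the exclusion is needed only for the paper's distinctness count, not for the conclusion. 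In short, your digon-centraliser observation buys a cleaner and slightly more robust argument, while the paper's route is elementary finite checking with each hypothesis visibly consumed. One small slip: $c_x$ is nontrivial for \emph{every} admissible $G$, since $x$ central would force $G=\langle x,y\rangle$ abelian and hence $G\cong\ZZ_6$, so the hedge in your first paragraph is unnecessary.
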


\begin{proof}
Clearly, $\Gamma$ is connected.
Since $G$ is not isomorphic to $\ZZ_6$, we have $y^x\neq y$. In particular, we have $|S|=3$. If $y^x=y^{-1}$, then $G\cong\Sym(3)$ and the result can be checked directly. We therefore assume that $y^x\neq y^{-1}$. Since $G\ncong \ZZ_3\wr\ZZ_2$, we have $y^xy\neq yy^x$. 

We have that $\Gamma^+(x)=\{1,yx,xy\}$, $\Gamma^+(y)=\{xy,y^2, y^xy\}$ and $\Gamma^+(y^x)=\{yx,yy^x,(y^2)^x\}$. One can check that the only equalities between elements of these sets are the ones between elements having the same representation. In other words,  $|\{1,yx,xy,y^2,y^xy,yy^x,(y^2)^x\}|=7$. (For example, if $yx=y^xy$, then $x^{y^{-1}}=y^x$, contradicting the fact that $x$ and $y$ have different orders.)

Let $A=\Aut(\Gamma)$ and let $c_x$ denote conjugation by $x$. Note that $c_x\in A_1$. We first show that $A_1^{+[1]}=1$.  It can be checked that $y^2$ is the unique out-neighbour of $y$ that is also an in-neighbour of $1$, hence it is fixed by $A_1^{+[1]}$, and so is $(y^2)^x$ by analogous reasoning. We have seen earlier that $xy$ is the unique common out-neighbour of $x$ and $y$, hence it too is fixed by $A_1^{+[1]}$, and similarly for $yx$. Being the only remaining out-neighbours of $y$, $y^xy$ must be also fixed, and similarly for $yy^x$. Thus $A_1^{+[1]}=A_1^{+[2]}$. Since $\Gamma$ is connected,~\Cref{lemma:fixNeighbourhood} implies that $A_1^{+[1]}=1$.

Note that $x$ is the only out-neighbour of $1$ that is also an in-neighbour, hence it is fixed by $A_1$, whereas $c_x$ interchanges $y$ and $y^x$. It follows that $|A_1|=|A_1:A_1^{+[1]}|=2$ and $\Gamma$ has Cayley index $2$, as desired.
\end{proof}

\begin{proof}[Proof of~\Cref{prop:symn}]
Let $\Gamma=\Cay(G,\{x,y,y^x\})$. By~\Cref{Order2Order3}, $\Gamma$ has Cayley index $2$.  Since $|G:H|=2$ and $y$ has order $3$,  we have $y \in H$. As $\langle x,y\rangle=G$, we have $x \notin H$ and $G=H\rtimes\langle x\rangle$.   Clearly, $\{x,y,y^x\}$ is closed under conjugation by $x$. It follows by~\Cref{second-subgroup} that  $\Aut(\Gamma)$ has a regular subgroup distinct from $G$ and isomorphic to $H \times \langle x\rangle \cong H \times \ZZ_2$.
\end{proof}

It was shown by Miller  \cite{miller} that, when  $n \ge 9$, $\Sym(n)$ admits a generating set consisting of an element  of order $2$ and one of order $3$; this is also true when $n\in\{3,4\}$. In these cases, we can apply~\Cref{prop:symn} with $H= \Alt(n)$ to obtain a Cayley digraph  on $\Sym(n)$ that has Cayley index $2$ and whose automorphism group contains a regular subgroup isomorphic to $\Alt(n) \times \ZZ_2$.

A short alternate proof of this fact can be derived from a result of Feng~\cite{feng}. This yields a Cayley graph and is valid for $n\geq 5$.

\begin{prop}\label{newprop}
 If $n \ge 5$, then there is a Cayley graph on $\Sym(n)$ with Cayley index $2$, whose automorphism group contains a regular subgroup isomorphic to $\Alt(n) \times \ZZ_2$.
\end{prop}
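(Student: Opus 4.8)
The plan is to invoke a result of Feng~\cite{feng} on automorphism groups of Cayley graphs of symmetric groups whose connection set consists of transpositions. Given a set $S$ of transpositions generating $\Sym(n)$, let its \emph{transposition graph} $T$ be the graph on $\{1,\dots,n\}$ in which $i$ and $j$ are adjacent exactly when $(i\,j)\in S$; connectedness of $T$ is equivalent to $S$ generating $\Sym(n)$. In the form we need, Feng's theorem asserts that for $n\ge 5$ the group $\Aut(\Cay(\Sym(n),S))$ is the semidirect product of the right regular representation of $\Sym(n)$ by the group of graph automorphisms of $T$ (each such automorphism acting on $\Sym(n)$ by relabelling the points, and inducing a graph automorphism fixing the identity vertex as in~\Cref{lemma:aut}). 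Hence the Cayley index of $\Cay(\Sym(n),S)$ equals $|\Aut(T)|$. Since every transposition is an involution we have $S=S^{-1}$, so $\Cay(\Sym(n),S)$ is a graph. The problem therefore reduces to choosing a generating set $S$ of transpositions whose transposition graph $T$ satisfies $\Aut(T)\cong\ZZ_2$ and whose nontrivial automorphism is realised by an \emph{odd} permutation of $\{1,\dots,n\}$.

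For this I would take $T$ to be the ``broom'': a path $v_1v_2\cdots v_{n-2}$ together with two extra leaves $a,a'$ both attached to $v_1$. For $n\ge 5$ the vertex $v_1$ is the unique vertex of degree $3$, the other path vertices are pinned down by their distances from $v_1$ (in particular the far leaf $v_{n-2}$ is distinguished from $a,a'$ since it lies at distance $\ge 2$ from $v_1$), and the only nontrivial symmetry of $T$ is the swap of $a$ and $a'$. Thus $\Aut(T)=\langle(a\,a')\rangle\cong\ZZ_2$, and its generator $\tau:=(a\,a')$ is a single transposition, hence an odd permutation. (For $n=4$ the broom degenerates into a star $\K_{1,3}$ whose automorphism group is $\Sym(3)$, which explains why $n\ge 5$ is the natural bound here.) Letting $S$ be the set of transpositions indexed by the edges of $T$, Feng's theorem gives that $\Cay(\Sym(n),S)$ has Cayley index $|\Aut(T)|=2$.

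It remains to produce the regular subgroup, for which I would apply~\Cref{second-subgroup} with $G=\Sym(n)$, $H=\Alt(n)$ and $B=\langle\tau\rangle$. Since $\tau$ is an odd involution, $\Alt(n)\cap\langle\tau\rangle=1$ and $|\Alt(n)\langle\tau\rangle|=n!$, so $G=HB$ with $H\cap B=1$ and both factors nontrivial. Conjugation by $\tau$ sends each transposition $(i\,j)$ to $(i^\tau\,j^\tau)$; as $\tau$ is a graph automorphism of $T$, it permutes the edges of $T$, so $S$ is closed under conjugation by $B$. Then~\Cref{second-subgroup} yields a regular subgroup of $\Aut(\Cay(\Sym(n),S))$ distinct from the right regular representation of $\Sym(n)$ and isomorphic to $H\times B\cong\Alt(n)\times\ZZ_2$, completing the proof.

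The main obstacle is the parity condition. Feng's theorem makes both the Cayley-index computation and the existence of the extra stabiliser automorphism essentially automatic, but to apply~\Cref{second-subgroup} one needs a complement to $\Alt(n)$ in $\Sym(n)$, that is, the nontrivial automorphism of the transposition graph must be realised by an odd permutation. The most obvious tree with $\Aut\cong\ZZ_2$ is a path, whose reflection has sign $(-1)^{\lfloor n/2\rfloor}$ and is therefore even whenever $n\equiv 0,1\pmod 4$; the broom circumvents this uniformly by forcing the graph automorphism to be a single transposition, which is odd for every $n\ge 5$.
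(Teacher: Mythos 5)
Your proposal is correct and follows essentially the same route as the paper: the paper's connection set $T=\{(1\ 2),(2\ 3),(2\ 4)\}\cup\{(i\ i+1):4\le i\le n-1\}$ has exactly your ``broom'' as its transposition graph (with $v_1=2$ and leaves $a=1$, $a'=3$), and the paper likewise combines Feng's theorem with \Cref{second-subgroup} applied to $H=\Alt(n)$ and $B$ generated by the odd involution swapping the two leaves. Your closing remark on the parity obstruction for the path is a nice explanation of why this particular tree is chosen, but the argument itself coincides with the paper's.
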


\begin{proof}
Let $T=\{(1\ 2),(2\ 3), (2\ 4)\} \cup \{(i\ i+1): 4 \le i \le n-1\}$  and let $\Gamma=\Cay(\Sym(n),T)$. Note that all elements of $T$ are transpositions. Let $\Tra(T)$ be the transposition graph of $T$, that is,  the graph with vertex-set  $\{1, \ldots, n\}$ and with  an edge $\{i,j\}$ if and only if $(i\ j) \in T$. Note that $\Tra(T)$ is a tree and thus $T$ is a minimal generating set for $\Sym(n)$ (see for example~\cite[Section~3.10]{godsilroyle}).  Let $B=\langle (1\ 3)\rangle$. Since $n \ge 5$, $\Aut(\Tra(T))=B$. It follows by~\cite[Theorem 2.1]{feng} that $\Aut(\Gamma)  \cong \Sym(n) \rtimes B$. In particular, $\Gamma$  has Cayley index $2$.

Note that $\Sym(n)=\Alt(n) \rtimes B $ and that $T$ is closed under conjugation by $B$. Applying ~\Cref{second-subgroup} with $H=\Alt(n)$  shows that $\Aut(\Gamma)$ has a regular subgroup isomorphic to $\Alt(n) \times \ZZ_2$.
\end{proof}

\end{document}